\newtheorem{theorem}{Theorem}[section]
\newtheorem{proposition}[theorem]{Proposition}
\theoremstyle{definition}
\theoremstyle{remark}
\newtheorem{remark}[theorem]{Remark}
\newtheorem*{claim}{Claim}
\theoremstyle{question}
\newtheorem*{theorem*}{Theorem}
\numberwithin{equation}{section}
\newcommand{\Q}{\mathbb{Q}}
\begin{document}

\title{Constructing totally $p$-adic numbers of small height}

\author{S. Checcoli}
\address{S.~Checcoli, Institut Fourier, Universit\'e Grenoble Alpes, 100 rue des Math\'ematiques,  38610 Gi\`eres, France}
\email{sara.checcoli@univ-grenoble-alpes.fr}

\author{A. Fehm}
\address{A.~Fehm, Institut f\"ur Algebra, Fakult\"at Mathematik, Technische Universit\"at Dresden, 01062 Dresden, Germany}
\email{arno.fehm@tu-dresden.de}

\begin{abstract} 
Bombieri and Zannier gave an effective construction of algebraic numbers of small height inside the maximal Galois extension of the rationals which is totally split at a given finite set of prime numbers. They proved, in particular, an explicit upper bound for the lim inf of the height of elements in such fields.
We generalize their result in an effective way to maximal Galois extensions of number fields with given local behaviour at finitely many places.
\end{abstract}
\maketitle
\section{Introduction}
\noindent
Let $h$ denote the absolute logarithmic Weil height on 
the field $\overline{\Q}$ of algebraic numbers. 
We are interested in explicit height bounds for elements of $\overline{\Q}$ with special local behaviour at a finite set of primes.
The first result in this context is due to Schinzel \cite{Sch} who proved a height lower bound for elements in the field of totally real algebraic numbers  $\Q^{\rm tr}$, the maximal Galois extension of $\Q$ in which the infinite prime splits totally. More precisely, he showed that every $\alpha\in \Q^{\rm tr}$ has either $h(\alpha)=0$ or \[h(\alpha)\geq \frac{1}{2}\log\left(\frac{1+\sqrt{5}}{2}\right).\]
Explicit upper and lower bounds for the limit infimum of the height of algebraic integers in $\Q^{\rm tr}$ are given in \cite{Smy80,Smy81, Fla96}.

In \cite{BZ} Bombieri and Zannier
investigate the analogous problem for the $p$-adic numbers.
More precisely, in \cite[Theorem 2]{BZ} they prove the following:
\begin{theorem}[Bombieri--Zannier]\label{BZ_lower}
Let $p_1,\dots,p_n$ be distinct prime numbers,
for each $i$ let $E_i$ be a finite Galois extension of $\mathbb{Q}_{p_i}$,
and $L$ the maximal Galois extension of $\mathbb{Q}$ contained in all $E_i$.
Denote by $e_i$ and $f_i$ the ramification index and inertia degree of $E_i/\mathbb{Q}_{p_i}$.
Then
$$
 \liminf_{\alpha\in L} h(\alpha) \;\geq\; \frac{1}{2}\cdot \sum_{i=1}^n\frac{\log(p_i)}{e_i(p_i^{f_i}+1)}.
$$
\end{theorem}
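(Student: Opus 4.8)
The strategy is the classical one for height lower bounds in infinite extensions with prescribed local behaviour (Schinzel, Bombieri--Zannier): reduce to a pointwise statement, localize the conjugates at the $p_i$, and extract a lower bound by pairing the minimal polynomial of $\alpha$ with a cleverly chosen auxiliary polynomial through the product formula over $\mathbb{Q}$.

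\emph{Reductions and localization.} Since each $E_i$ is a finite extension of $\mathbb{Q}_{p_i}$ it contains only finitely many roots of unity, hence so does $L$; combined with Northcott's theorem this reduces the statement to the pointwise estimate $h(\alpha)\ge \tfrac12\sum_{i=1}^n\tfrac{\log p_i}{e_i(p_i^{f_i}+1)}-\varepsilon(\alpha)$, with $\varepsilon(\alpha)\to 0$ as $[\mathbb{Q}(\alpha):\mathbb{Q}]\to\infty$, for every $\alpha\in L$ that is not a root of unity. Fix such an $\alpha$, set $d=[\mathbb{Q}(\alpha):\mathbb{Q}]$, let $\alpha_1,\dots,\alpha_d$ be its conjugates and $M\subseteq L$ the Galois closure of $\mathbb{Q}(\alpha)$, so that $\alpha_1,\dots,\alpha_d\in M$. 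For each $i$ fix an embedding $\overline{\mathbb{Q}}\hookrightarrow\mathbb{C}_{p_i}$; since $M/\mathbb{Q}$ is Galois and $M\subseteq L$, the defining property of $L$ forces the closure of $M$ at the induced place above $p_i$ to lie inside $E_i$, so that $\alpha_1,\dots,\alpha_d\in E_i$. Let $\kappa_i$ be the residue field of $E_i$, so $\lvert\kappa_i\rvert=p_i^{f_i}$ and $\lvert\mathbb{P}^1(\kappa_i)\rvert=p_i^{f_i}+1$, and normalize the valuation $v_i$ by $v_i(p_i)=1$, so that $v_i(E_i^\times)=\tfrac1{e_i}\mathbb{Z}$.

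\emph{The auxiliary polynomial.} Let $P\in\mathbb{Z}[x]$ be the primitive minimal polynomial of $\alpha$, with leading coefficient $a_0$, and let $g\in\mathbb{Z}[x]$ be an auxiliary polynomial to be chosen; if $g$ and $P$ share a root then $\alpha$ lies in the fixed finite set of roots of $g$, which is irrelevant for the $\liminf$, so we may assume $\mathrm{Res}(P,g)=\pm a_0^{\deg g}\prod_{j=1}^d g(\alpha_j)$ is a nonzero rational integer. Then $\lvert\mathrm{Res}(P,g)\rvert_\infty\ge 1$, and since the resultant has absolute value $\le 1$ at every finite place, the product formula yields
$$\lvert a_0\rvert_\infty^{\deg g}\prod_{j=1}^d\lvert g(\alpha_j)\rvert_\infty\ \ge\ \prod_{i=1}^n\Bigl(\lvert a_0\rvert_{p_i}^{\deg g}\prod_{j=1}^d\lvert g(\alpha_j)\rvert_{p_i}\Bigr)^{-1}.$$
One chooses $g$ so that its roots lie in every $E_i$, are as small as possible archimedeanly, and are $p_i$-adically distributed over the residue classes of $\mathbb{P}^1(\kappa_i)$, thereby forcing each $\alpha_j$ to be $p_i$-adically close to a root of $g$; for a non-integral conjugate this happens in the residue class of $\infty$, which is why it is important \emph{not} to restrict to algebraic integers and why $p_i^{f_i}+1$, rather than $p_i^{f_i}$, is the right count. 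A basic choice that already gives the correct shape is $g=\prod_i(x^{p_i^{f_i}}-x)$, whose roots are $0$ and roots of unity: they lie in $E_i$, have archimedean absolute value $\le 1$, and their reductions exhaust $\kappa_i$, so that $\lvert g(\alpha_j)\rvert_{p_i}\le p_i^{-1/e_i}$ for all $j$. To reach the sharp constant one refines this by iterating the construction inside the smaller residue discs (a $p_i$-adic Fekete/transfinite-diameter computation) and by using $\alpha$ and $\alpha^{-1}$ symmetrically -- both of whose conjugate sets lie in $E_i$ -- which is the source of the factor $\tfrac12$.

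\emph{Conclusion and the main difficulty.} With an optimal $g$ the right-hand side of the displayed inequality is at least $\prod_i p_i^{\,d/(e_i(p_i^{f_i}+1))}$, while (the $\lvert a_0\rvert_\infty$-factor combining cleanly with the Mahler measure) the left-hand side is at most $C(g)^d\,M(\alpha)^{\deg g}$, where $M(\alpha)=e^{d\,h(\alpha)}$ and $C(g)$ depends only on $g$, hence only on the data $p_i,e_i,f_i$. Taking logarithms and dividing by $d\deg g$ gives $h(\alpha)\ge\tfrac12\sum_i\tfrac{\log p_i}{e_i(p_i^{f_i}+1)}-\tfrac{\log C(g)}{\deg g}$, and letting the complexity of $g$ grow turns the last term into $\varepsilon(\alpha)=O(1/d)$ along any minimizing sequence. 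The localization and the product-formula bookkeeping are routine; the real work -- and the step I expect to be the main obstacle -- is the precise construction and optimization of $g$: producing exactly the denominator $e_i(p_i^{f_i}+1)$ (which needs the projective viewpoint on $\mathbb{P}^1(\kappa_i)$ together with the iterated Fekete-type estimate) while keeping the archimedean constant $C(g)$ subexponential in $\deg g$, so that it contributes nothing in the limit -- a point that is genuinely delicate for small primes such as $p_i=2$. In other words, the theorem is a sharp adelic Fekete--Szeg\H{o}-type lower bound for algebraic numbers whose conjugates are confined to the fields $E_i$, and the crux is the computation of the corresponding local equilibrium constant at each $p_i$.
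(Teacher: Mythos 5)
The paper does not prove Theorem~\ref{BZ_lower}: it is quoted verbatim from Bombieri--Zannier and the citation \cite[Theorem 2]{BZ} is the only source offered. The entire proof effort of the paper is devoted to Theorem~\ref{mainthm}, which is an \emph{upper} bound generalizing Theorem~\ref{BZ_upper}, not a lower bound. So there is no in-paper argument against which to check your proposal.

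Judged on its own, your proposal is a sketch, not a proof, and you say so yourself (``the real work \dots is the main obstacle''). The route you outline --- pair the minimal polynomial against a cleverly chosen auxiliary polynomial $g$ via the resultant and the product formula, optimize $g$ by an iterated Fekete/transfinite-diameter computation at each $p_i$ --- is closer in spirit to Fili's capacity-theoretic approach \cite{Fil,FiliPetsche} than to what Bombieri and Zannier actually do. Their argument in \cite{BZ} estimates the $p_i$-adic valuation of the \emph{discriminant} of the minimal polynomial of $\alpha$ by a pigeonhole count over $\mathbb{P}^1$ of the residue rings $\mathcal{O}_{E_i}/\mathfrak{P}_i^m$ (the $p_i^{f_i}+1$ is exactly $|\mathbb{P}^1(\kappa_i)|$), and then applies the product formula together with the classical bound for the archimedean size of the discriminant in terms of the Mahler measure. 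In that argument the $\tfrac12$ arises because the discriminant carries the exponent $2d-2\sim 2d$ against $M(\alpha)^{2d-2}$ while the $p_i$-adic gain scales like $d^2$; it does \emph{not} come from treating $\alpha$ and $\alpha^{-1}$ symmetrically, and your explanation of that factor is not convincing as stated. Finally, your ``basic choice'' $g=\prod_i (x^{p_i^{f_i}}-x)$ does not obviously handle the non-integral conjugates (it is monic, so $|g(\alpha_j)|_{p_i}$ is large when $|\alpha_j|_{p_i}>1$), so even the qualitative first step of your construction needs more care than the sketch suggests.
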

In the special case $E_i=\mathbb{Q}_{p_i}$, Bombieri and Zannier in \cite[Example 2]{BZ} show that the lower bound in Theorem \ref{BZ_lower} is almost optimal. More precisely: 
\begin{theorem}[Bombieri--Zannier]\label{BZ_upper}
Let $p_1,\dots,p_n$ be prime numbers and let $L$ be the maximal Galois extension
of $\mathbb{Q}$ contained in all $\mathbb{Q}_{p_i}$. Then
$$
 \liminf_{\alpha\in L} h(\alpha) \;\leq\; \sum_{i=1}^n\frac{\log(p_i)}{p_i-1}.
$$
\end{theorem}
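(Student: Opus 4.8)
The approach I would take is a Fekete--Szeg\H{o}/capacity-theoretic construction, engineered so that the local conditions force total splitting at $p_1,\dots,p_n$ while the archimedean condition keeps the height as small as the target allows. Fix a real number $R>1$, to be optimized, and consider the adelic set $\mathbb{E}=(E_v)_v$ over $\Q$ given by
$$E_\infty=\{z\in\mathbb{C}:|z|\le R\},\qquad E_{p_i}=\Z_{p_i}\subseteq\mathbb{C}_{p_i}\ \ (1\le i\le n),\qquad E_q=\{z\in\mathbb{C}_q:|z|_q\le 1\}\ \text{ for all other finite } q.$$
Each $E_v$ is compact and invariant under the local Galois action, $E_\infty$ is stable under complex conjugation, and $E_v$ equals the closed unit ball for all but one place; moreover $E_v$ lies in the closed unit ball at every finite $v$. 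Under these hypotheses the adelic Fekete--Szeg\H{o} theorem (Fekete--Szeg\H{o} in the real case, Cantor and Rumely in the adelic/non-archimedean setting) asserts that if the global capacity $\gamma(\mathbb{E})=\prod_v\gamma_v(E_v)$ exceeds $1$, then there exist infinitely many algebraic integers $\alpha$ all of whose $\Q$-conjugates lie in $E_v$ for every place $v$.

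The construction then reduces to a capacity computation and an optimization. One has $\gamma_v(\{|z|\le\rho\})=\rho$ at every place, so $\gamma_\infty(E_\infty)=R$ and $\gamma_q(E_q)=1$ for the finite $q\notin\{p_1,\dots,p_n\}$. The one substantive point is $\gamma_p(\Z_p)=p^{-1/(p-1)}$, which I would obtain by identifying the normalized Haar measure $\mu$ on $\Z_p$ as the equilibrium measure: since $\Z_p$ is a compact group, $\int_{\Z_p}(-\log|x-y|_p)\,d\mu(y)=\log p\cdot\int_{\Z_p}v_p(t)\,d\mu(t)=\log p\sum_{k\ge 0}k(p^{-k}-p^{-k-1})=\tfrac{\log p}{p-1}$ is independent of $x\in\Z_p$, so it equals the Robin constant and $\gamma_p(\Z_p)=e^{-(\log p)/(p-1)}=p^{-1/(p-1)}$ (alternatively, the residues mod $p^m$ are a near-optimal Fekete set, giving $m$-th transfinite diameter $p^{-\frac{1}{p-1}+\frac{m}{p^m-1}}$ via Legendre's formula for $v_p$ of a product of differences). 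Hence $\gamma(\mathbb{E})=R\cdot\prod_{i=1}^n p_i^{-1/(p_i-1)}$, so for any $\delta>0$ the choice $R=(1+\delta)\prod_{i=1}^n p_i^{1/(p_i-1)}$, which is $>1$, gives $\gamma(\mathbb{E})=1+\delta>1$.

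It then remains to read off the conclusion. For $\alpha$ as produced by the theorem, every conjugate of $\alpha$ lies in $\Z_{p_i}\subseteq\Q_{p_i}$, so the splitting field over $\Q$ of the minimal polynomial of $\alpha$ is generated by elements of $\Q_{p_i}$, hence embeds into $\Q_{p_i}$; being Galois over $\Q$ it is therefore contained in $L$, so $\alpha\in L$. Since $\alpha$ is an algebraic integer whose archimedean conjugates all have absolute value $\le R$, its Mahler measure is at most $R^{[\Q(\alpha):\Q]}$, whence $h(\alpha)\le\log R$. There are infinitely many such $\alpha$, all with $h(\alpha)\le\log R$, so by Northcott's theorem their degrees are unbounded; consequently, for every finite subset $S\subseteq L$ infinitely many of them lie outside $S$, and therefore $\liminf_{\alpha\in L}h(\alpha)\le\log R=\sum_{i=1}^n\frac{\log p_i}{p_i-1}+\log(1+\delta)$. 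Letting $\delta\to 0$ proves the theorem.

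The main obstacle I anticipate is not conceptual but a matter of quoting the right version of the capacity theorem: one needs a statement that simultaneously (i) produces \emph{infinitely many} algebraic integers, (ii) permits a genuinely non-disc compact local set such as the Cantor-type set $\Z_{p_i}\subseteq\mathbb{C}_{p_i}$, and (iii) uses a normalization of local capacities for which the product $\prod_v\gamma_v(E_v)$ is well defined (the closed unit ball having capacity $1$). Granting such a statement, the only real computation is $\gamma_p(\Z_p)=p^{-1/(p-1)}$ above; if one prefers to avoid the black box, the pigeonhole/counting argument underlying Fekete--Szeg\H{o} can be carried out directly for these particular sets, though the archimedean side is then more delicate because one must control roots rather than coefficients.
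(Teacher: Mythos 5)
Your argument is correct, but it takes a genuinely different route from the one in the paper. You invoke the adelic Fekete--Szeg\H{o} theorem (in the form due to Cantor and Rumely) with local sets $E_\infty=\{|z|\le R\}$, $E_{p_i}=\Z_{p_i}$, and the closed unit ball at all remaining finite places; the only substantive computation is $\gamma_p(\Z_p)=p^{-1/(p-1)}$, which you obtain correctly by identifying Haar measure as the equilibrium measure, and then you optimize over $R$. The paper instead follows Bombieri and Zannier's explicit construction: starting from the local polynomials $\prod_{\alpha\in \tilde A_i}(X-\alpha)$, it merges them via the Chinese Remainder Theorem (with a lattice-reduction bound, Proposition~\ref{lem:small_rep}, to control coefficient sizes) into a single monic irreducible $g\in\mathcal{O}_K[X]$, and then verifies with a quantitative variant of Hensel's lemma (Proposition~\ref{val-BZ}) that $g$ splits completely in every $E_i$; the height bound comes from Proposition~\ref{H-min-root}. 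Your capacity-theoretic route is conceptually cleaner and is essentially the strategy of Fili, which the paper discusses in Remark~\ref{rem-Fili}. The trade-offs are exactly the ones you anticipate: one needs a version of Fekete--Szeg\H{o} strong enough to handle the Cantor-like set $\Z_{p_i}\subsetneq\mathcal{O}_{\mathbb{C}_{p_i}}$ (it is \emph{not} a disc in $\mathbb{C}_{p_i}$, so the Berkovich-space machinery of Fili and Baker--Rumely is genuinely required), and the argument is inherently non-effective: it yields infinitely many $\alpha$ of bounded height but no explicit relation between $h(\alpha)$ and $[\Q(\alpha):\Q]$. The paper's elementary construction is designed precisely to produce such explicit degree--height bounds, which is the content of the refined inequalities \eqref{eqn:thm1} and \eqref{eqn:thm2} in Theorem~\ref{mainthm} and is not recovered by your approach.
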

Other proofs, refinements and generalizations were given in
\cite{Fil, Pot,FiliPetsche, FP, PS19}
See also \cite{Smy07} for a general survey on the height of algebraic numbers. 
In Remark \ref{rem-Fili} we will discuss in detail the contribution \cite{Fil} and how it compares to our work.

The goal of this note is to generalize in an effective way the upper bound Theorem \ref{BZ_upper}
to general $E_i$,
and to further replace the base field $\mathbb{Q}$ by an arbitrary number field. Our main result is the following:

\begin{theorem}\label{mainthm}
Let $K$ be a number field and let $\mathfrak{p}_1,\dots,\mathfrak{p}_n$ be distinct primes ideals of the ring of integers $\mathcal{O}_K$ of $K$.  
For each $i$, 
let $E_i$ be a finite Galois extension of the completion $F_i$ of $K$ at $\mathfrak{p}_i$.
Denote by $e_i$ and $f_i$ the ramification index and the relative inertia degree of $E_i/F_i$
and write $q_i=|\mathcal{O}_K/\mathfrak{p}_i|=p_i^{{f(\mathfrak{p}_i|p_i)}}$.
Then for the maximal Galois extension $L$ of $K$ contained in all $E_i$,
\[
 \liminf_{\alpha\in L} h(\alpha) \;\leq\; \sum_{i=1}^n \frac{f(\mathfrak{p}_i|p_i)}{[K:\mathbb{Q}]}\cdot\frac{\log(p_i)}{e_i(q_i^{f_i}-1)}.
\]
More precisely, let
\[
 C=\max\left\{[K:\Q], |\Delta_K|, \max_i (e_i f_i), \max_i q_i^{f_i}\right\}
\] 
where $\Delta_K$ is the absolute discriminant of $K$.
Then for every $0<\epsilon<1$ there exist infinitely many $\alpha\in\mathcal{O}_L$ of height
\begin{equation}\label{eqn:thm1}
 h(\alpha)\leq \sum_{i=1}^n \frac{f(\mathfrak{p}_i|p_i)}{[K:\mathbb{Q}]}\cdot\frac{\log(p_i)}{e_i(q_i^{f_i}-1)}+13 nC^{2n+2}\frac{\log \left([K(\alpha):K]\right)}{[K(\alpha):K]} +\begin{cases}0,&n=1\\n\epsilon,&n>1\end{cases}.
\end{equation}
Namely, for every $\rho\geq 3C^n$ there exists such $\alpha$ of degree
\begin{equation}\label{eqn:thm2}
 \rho\leq [K(\alpha):K] \leq \begin{cases}C\rho,&n=1\\\rho^{\frac{(4 \log C)^{n+1}}{\log^n(1+\epsilon)}},&n>1\end{cases}.
\end{equation}
\end{theorem}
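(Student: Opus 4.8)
Following the approach of Bombieri and Zannier in the case $K=\mathbb Q$, $n=1$, $E_1=\mathbb Q_{p_1}$ (their Example~2), the plan is to produce $\alpha$ as a root of a suitable monic polynomial $G\in\mathcal{O}_K[x]$ whose degree $d=[K(\alpha):K]$ can be taken arbitrarily large, the two constraints being that $\alpha$ must lie in $\mathcal{O}_L$ and that $h(\alpha)$ must be close to the asserted bound.

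\emph{Membership in $\mathcal{O}_L$.} A root of a monic polynomial over $\mathcal{O}_K$ is an algebraic integer, so it suffices to build $G$ irreducible over $K$ such that, for each $i$ and under one embedding $\overline{\mathbb Q}\hookrightarrow\overline{F_i}$, every root of $G$ lies in $\mathcal{O}_{E_i}$ --- this is possible because $E_i/F_i$ is Galois. Then the completion at any place above $\mathfrak{p}_i$ of the Galois closure of $K(\alpha)/K$, which is the splitting field of $G$ over $F_i$, is contained in $E_i$; hence that Galois closure embeds into every $E_i$ and so lies in $L$. Concretely one prescribes, for each $i$, the $\mathfrak{p}_i$-adic shape of the roots of $G$: a fraction of them tending to $\tfrac{1}{q_i^{f_i}-1}$ lies in the maximal ideal of $\mathcal{O}_{E_i}$, each with $\mathfrak{p}_i$-valuation $1/e_i$, and the remaining ones are units of $\mathcal{O}_{E_i}$ generating the various subfields of $E_i$. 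By the Chinese Remainder Theorem applied to the coefficients one finds a single $G\in\mathcal{O}_K[x]$ realising all these prescribed reductions modulo $\mathfrak{p}_i^{N_i}$; Hensel's lemma, applied to the unit part whose reduction can be made separable, together with Krasner's lemma for the ramified factors, upgrades the congruences to genuine factorisations over $F_i$ once the $N_i$ are large enough. Since the residue fields have bounded size while the number of roots grows with $d$, the roots become pairwise distinct only at precision $N_i\asymp\log d$.

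\emph{The height.} Since $\alpha$ is an algebraic integer, $h(\alpha)=\frac{1}{d[K:\mathbb Q]}\sum_{\tau\colon K\hookrightarrow\mathbb C}\log M(\tau G)$, where $M$ denotes the Mahler measure, and $M(\tau G)\ge\max(1,|\tau(G(0))|)$ with $\prod_\tau|\tau(G(0))|=|N_{K/\mathbb Q}(G(0))|$. Now $G(0)=\pm N_{K(\alpha)/K}(\alpha)$, and the prescribed shape gives $v_{\mathfrak{p}_i}(G(0))=d/(e_i(q_i^{f_i}-1))+O(1)$; a Minkowski bound lets one take the ideal $(G(0))$ to differ from $\prod_i\mathfrak{p}_i^{v_{\mathfrak{p}_i}(G(0))}$ by an ideal of norm $O(\sqrt{|\Delta_K|})$, and adjusting $G(0)$ by a unit (Dirichlet) one may assume in addition that all $|\tau(G(0))|$ are comparable and $\ge 1$. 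The crux is then to choose $G$, among the polynomials compatible with the $\mathfrak{p}_i$-adic constraints, so that its complex roots are nearly equidistributed on the circle of radius $|N_{K/\mathbb Q}(G(0))|^{1/(d[K:\mathbb Q])}$, i.e. so that $M(\tau G)=|\tau(G(0))|\,(1+o(1))$ for every $\tau$; this is precisely where an explicit polynomial family as in Bombieri--Zannier is needed, the $\mathfrak{p}_i$-adic conditions being imposed only at the growing but still small precision $N_i\asymp\log d$, so that their effect on the archimedean location of the roots contributes only to the error term. One then obtains
\[
 h(\alpha)=\frac{\log|N_{K/\mathbb Q}(G(0))|}{d[K:\mathbb Q]}+O\!\Big(\tfrac{\log d}{d}\Big)=\frac{1}{[K:\mathbb Q]}\sum_{i=1}^n\frac{f(\mathfrak{p}_i|p_i)\log p_i}{e_i(q_i^{f_i}-1)}+O\!\Big(\tfrac{\log d}{d}\Big),
\]
and letting $d\to\infty$ gives the claimed $\liminf$ inequality; bounding each implied constant --- they involve only $[K:\mathbb Q]$, $|\Delta_K|$, $\max_i e_if_i$ and $\max_i q_i^{f_i}$, all at most $C$ --- yields the explicit term $13nC^{2n+2}\log([K(\alpha):K])/[K(\alpha):K]$.

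\emph{Degrees, and the main obstacle.} The prescribed proportions can be realised exactly only when $d$ is divisible by $\prod_i e_i(q_i^{f_i}-1)<C^{2n}$, which forces the threshold $\rho\ge 3C^n$; for $n=1$ a single layer of this size suffices, giving $[K(\alpha):K]\le C\rho$ and no $\epsilon$-term. The main difficulty, responsible both for the extra $n\epsilon$ in the height bound and for the iterated exponent $\rho^{(4\log C)^{n+1}/\log^n(1+\epsilon)}$ in the degree when $n>1$, is that the $\mathfrak{p}_i$-adic shapes at the different primes cannot be imposed simultaneously on a single ``balanced'' polynomial; one instead proceeds place by place, extending the field already constructed for $\mathfrak{p}_1,\dots,\mathfrak{p}_{i-1}$ by a further layer adapted to $\mathfrak{p}_i$, each of the $n-1$ additional layers multiplying the degree by a bounded power of $C$ and loosening the local proportion there by a factor $1+\epsilon$, which accumulates to the stated bounds.
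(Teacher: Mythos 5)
Your overall plan---build a monic $G\in\mathcal O_K[X]$ splitting in each $E_i$, impose congruences at each $\mathfrak p_i$ via CRT, use a Hensel--Krasner argument to verify the splitting, and read off $h(\alpha)$ from the Mahler measures $M(\tau G)$---is the same skeleton as the paper's. But two of your central mechanisms diverge from the paper and, as described, contain real gaps.

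First, the height estimate. You argue from the \emph{lower} bound $M(\tau G)\ge|\tau G(0)|$ and then try to force near-equality by making the complex roots of $G$ equidistribute on a circle of radius $|N_{K/\Q}(G(0))|^{1/(d[K:\Q])}$, while also invoking a Minkowski-type adjustment and Dirichlet units to balance the $|\tau G(0)|$. None of this is needed, and it is not clear how to control the archimedean position of the roots once the $n$ local congruences (plus the irreducibility congruence mod an auxiliary prime $\mathfrak p_0$) have been imposed by CRT; your "$(1+o(1))$'' is precisely the hard step, and you leave it to an unnamed "explicit polynomial family.'' The paper avoids this entirely: it bounds $M(\tau G)$ from \emph{above} by $B\sqrt{\deg g+1}$ where $B$ bounds $|\sigma(a)|$ for every coefficient $a$ of $g$ (Proposition~\ref{H-min-root}), and it gets $B\le\delta_K N(\mathfrak a)^{1/[K:\Q]}$ with $\mathfrak a=\mathfrak p_0\mathfrak p_1^{m_1}\cdots\mathfrak p_n^{m_n}$ by choosing small CRT representatives via an LLL-type result for $\mathcal O_K$ (Proposition~\ref{lem:small_rep}). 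The precisions $m_i$ are then computed from the explicit valuation $v_i(g_i'(\alpha))$, and $m_i/\deg g\to\tfrac1{e_i(q_i^{f_i}-1)}$ gives the main term. Your constant-term computation and the paper's $m_i$ both come from the same geometric series, so the targets agree, but the route you sketch to an \emph{upper} bound does not close.

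Second, the degree constraint and the role of $\epsilon$. You explain the threshold $\rho\ge 3C^n$ and the $n\epsilon$ loss by a "place by place'' tower, each layer multiplying the degree by a bounded power of $C$. This neither matches the stated bound $\rho^{(4\log C)^{n+1}/\log^n(1+\epsilon)}$ (a \emph{power} of $\rho$ depending on $\epsilon$, not a bounded multiple) nor the actual mechanism. The real issue is that the $i$-th local polynomial has, naturally, roughly $q_i^{f_ik_i}$ roots, and all $n$ polynomials must have the \emph{same} degree $dr$; to make $q_i^{f_ik_i}/r\le 1+\epsilon$ simultaneously for all $i$, the paper applies Dirichlet's theorem on simultaneous approximation (Proposition~\ref{Dir}) to the logarithms $\log q_i^{f_i}$, and the denominator $Q^n$ of that approximation is exactly what produces the exponent $(4\log C)^{n+1}/\log^n(1+\epsilon)$ and the $\epsilon$-dependence. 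There is no iteration over places; a single polynomial $g$ is built at once. You also do not address the construction of Galois-invariant representative sets with controlled injectivity (Proposition~\ref{A_i}), which is what makes $\tilde g_i\in\mathcal O_{F_i}[X]$ and the valuation $v_i(g_i'(\alpha))$ computable---this is the technical heart of the local step.

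Your observations that the roots only separate at precision $\asymp\log d$, that Galois-ness of $E_i/F_i$ is what lets a single $F_i$-polynomial split completely in $E_i$, and that the auxiliary prime forces irreducibility, are all correct and in line with the paper. But as it stands the proposal does not give an upper bound on $h(\alpha)$, and the explanation of the degree window and the $\epsilon$-term is not the right one.
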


Note that in the special case $K=\mathbb{Q}$ and $E_i=\mathbb{Q}_{p_i}$
we reobtain Theorem \ref{BZ_upper}, 
except that Theorem \ref{mainthm} appears stronger in that the
result is effective and the $\liminf$ can be taken over algebraic integers.
However, 
an inspection of the proof of Bombieri and Zannier 
shows that it is effective as well and does in fact produce algebraic integers. 

\begin{remark}\label{rem-Fili}
Theorem \ref{mainthm} provides an effective version of a result of Fili \cite[Theorem 1.2]{Fil}.
The bound in that result seems to differ from ours by the factor  $e(\mathfrak{p}_i|p_i)$,
and \cite[Theorem 1.1]{Fil} (and similarly \cite[Theorem 9]{FiliPetsche}) also states a variant of Theorem~\ref{BZ_lower}
which contradicts our Theorem \ref{mainthm},
but
according to Paul Fili (personal communication)
this is merely an error in normalization in \cite{Fil} and \cite{FiliPetsche}
that became apparent when comparing to our result, 
and the $e_v$ in the denominator of Theorems 1.1, 1.2, and Conjecture 1 of \cite{Fil} (and similarly in the statements of \cite{FiliPetsche})
should have been the absolute instead of the relative ramification index. 
When this correction is made,
the lower bound of \cite[Theorem 1.1]{Fil} agrees with the one in \cite[Theorem 13]{FP},
and the upper bound of \cite[Theorem 1.2]{Fil} agrees with the one in Theorem \ref{mainthm}.

In any case, Fili's proof of \cite[Theorem 1.2]{Fil} uses capacity theory on analytic Berkovich spaces and does not provide explicit bounds on the degree and the height of a sequence of integral elements in the $\liminf$. Instead, our effective proof is more elementary and is inspired by Bombieri and Zannier's effective proof of Theorem \ref{BZ_upper}. 
To the best of our knowledge,
Theorem \ref{mainthm} is the only result currently available that gives a bound 
on the height in terms of the degree of such a sequence of $\alpha$,
except for the case where $K=\mathbb{Q}$ and $E_i=\mathbb{Q}_{p_i}$ for all $i$,
where such a bound can be deduced from \cite{BZ}.

We also remark that our use of \cite[Theorem 1.2]{Fil} in \cite{CF} is limited to the cases where \cite[Theorem 1.2]{Fil} agrees with  Theorem \ref{mainthm}.
\end{remark}

The paper is organised as  follows. 
In Section \ref{prel} we collect all the preliminary results needed to prove Theorem \ref{mainthm}, namely: 
a consequence of Dirichlet's theorem on simultaneous approximation (Proposition \ref{Dir}), 
a bound for the size of representatives in quotient rings of rings of integers (Proposition \ref{lem:small_rep}), 
a variant of Hensel's lemma (Proposition \ref{val-BZ}),
a bound for the height of a root of a polynomial defined over a number field in terms of its coefficients (Proposition \ref{H-min-root}),
and a construction of special Galois invariant sets of representatives of residue rings of local fields (Proposition \ref{A_i}).

The proof of Theorem \ref{mainthm} is carried out in Section \ref{sec:main}. We briefly sketch it here for clarity.
Following Bombieri and Zannier's strategy, given  $\rho\geq 3C^n$  we construct a monic irreducible polynomial $g\in\mathcal{O}_K[X]$ such that
\begin{enumerate}[(i)]
\item\label{d-g} its degree is upper and lower bounded in terms of $\rho$ as the degree of $\alpha$ in Theorem~\ref{mainthm},
\item\label{c-g} the complex absolute value of all conjugates of its coefficients is sufficiently small, and
\item\label{r-g} all its roots are contained in all $E_i$.
\end{enumerate}
In Bombieri and Zannier's proof of Theorem \ref{BZ_upper}, \eqref{d-g} and \eqref{c-g} were achieved by using the Chinese Remainder Theorem to deform the polynomial $\prod_{i=1}^{\rho} (X-i)$ into an irreducible polynomial of the same degree with coefficients small enough to give the desired bound for the height of the roots. Then a variant of Hensel's lemma was applied to show that the roots of the constructed polynomial are still in $\Q_{p_i}$ for each $i$.

In our generalisation, 
the degree of the polynomial is carefully chosen 
to obtain \eqref{d-g}
in Section \ref{sec:degree}
via Proposition \ref{Dir} (necessary only if $n>1$, which leads to the better bounds in the case $n=1$).
The polynomial $g$ satisfying \eqref{c-g} is then constructed in Section \ref{constr-G}:
We start with polynomials $\prod_{\alpha\in \tilde A_i} (X-\alpha)$, where now $\tilde A_i\subseteq \mathcal{O}_{E_i}$ is a set constructed using Proposition \ref{A_i}.
These polynomials are then merged into an irreducible polynomial $g$ by applying the Chinese Remainder Theorem and Proposition \ref{lem:small_rep} to bound the size of its coefficients.
Property \eqref{r-g}  is verified in Section \ref{roots-g}, using  Proposition \ref{val-BZ}. Finally, Proposition \ref{H-min-root} is applied to show that $g$ has a root $\alpha$ of height bounded from above as desired.

\section{Notation and preliminaries}\label{prel}
\noindent
We fix some notation. If $K$ is a number field or a non-archimedean local field we let $\mathcal{O}_K$ denote the ring of integers of $K$.
For an ideal $\mathfrak{a}$ of $\mathcal{O}_K$ 
we denote by $N(\mathfrak{a})=|\mathcal{O}_K/\mathfrak{a}|$ its norm.
For a nonzero prime ideal $\mathfrak{p}$ of $\mathcal{O}_K$,
we denote by $v_\mathfrak{p}$ the discrete valuation on $K$
with valuation ring $(\mathcal{O}_K)_{\mathfrak{p}}$
normalized such that $v_\mathfrak{p}(K^\times)=\mathbb{Z}$.
If $L/K$ is an extension of number fields and $\mathfrak{P}$ is a prime ideal of $\mathcal{O}_L$
lying above a prime ideal $\mathfrak{p}$ of $\mathcal{O}_K$
we denote by $e(\mathfrak{P}|\mathfrak{p})$ and $f(\mathfrak{P}|\mathfrak{p})$
the ramification index and the inertia degree.
For an extension $E/F$ of non-archimedean local fields we denote 
the ramification index and the inertia degree also by $e(E/F)$ and $f(E/F)$.

\subsection{Auxiliary results}
In this section we collect the preliminary results we need to prove Theorem \ref{mainthm}. These results are not related to each other and we list them in this section following their order of appearance in the proof of Theorem \ref{mainthm}.

\begin{proposition}\label{Dir}
Let $x_1,\dots,x_n$ be integers greater than $1$.
For every $\rho\geq 3$ and $0<\epsilon<1$ there exist positive integers $r,k_1,\dots,k_n$ such that $r\geq \rho$ and, for all $i$, $r\leq x_i^{k_i}\leq (1+\epsilon)r$  and
\[k_i\leq \frac{2^{2n+1}\log^n(\max_jx_j) \log(\rho)}{ \log(x_i) \log^n(1+\epsilon)}.\]
\end{proposition}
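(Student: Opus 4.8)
The plan is to produce a real number $\gamma>0$ and positive integers $k_1,\dots,k_n$ with
\[
 |k_i\log x_i-\gamma|<\tfrac12\log(1+\epsilon)\qquad\text{for all }i,
\]
and then to set $r=\min_i x_i^{k_i}$. Writing $\alpha_i=\log x_i$, $\delta=\log(1+\epsilon)$ and $A=\max_i\alpha_i=\log(\max_j x_j)$, these inequalities place all the integers $x_i^{k_i}=e^{k_i\alpha_i}$ in the interval $(e^{\gamma-\delta/2},e^{\gamma+\delta/2})$, whose endpoints differ by the factor $e^{\delta}=1+\epsilon$; hence $r$ is a positive integer with $r\le x_i^{k_i}<e^{\gamma-\delta/2}e^{\delta}\le(1+\epsilon)r$ for every $i$. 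I will use throughout that $\alpha_i>\delta$, which holds because $x_i\ge 2>1+\epsilon$ since $\epsilon<1$.

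To construct $\gamma$ and the $k_i$ I would apply the pigeonhole principle. Put $\Delta=2\log\rho$ and consider the $N:=1+\prod_{i=1}^{n}\lceil 2\alpha_i/\delta\rceil$ numbers $\gamma_j=j\Delta$, $j=1,\dots,N$. Reducing $\gamma_j$ modulo $\alpha_i$ in the $i$-th coordinate and partitioning the circle $\mathbb{R}/\alpha_i\mathbb{Z}$ of circumference $\alpha_i$ into $\lceil 2\alpha_i/\delta\rceil$ arcs of length at most $\delta/2$ distributes $\gamma_1,\dots,\gamma_N$ among $\prod_i\lceil 2\alpha_i/\delta\rceil<N$ boxes, so two of them, say $\gamma_a$ and $\gamma_b$ with $a<b$, fall into the same box. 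Then $\gamma:=\gamma_b-\gamma_a=(b-a)\Delta$ reduces, modulo each $\alpha_i$, to a point at circular distance less than $\delta/2$ from $0$; equivalently there is an integer $k_i$ with $|k_i\alpha_i-\gamma|<\delta/2$. Since $k_i\alpha_i>\gamma-\delta/2\ge\Delta-\delta/2>0$ we get $k_i\ge 1$, and since $\delta/2<\alpha_i$ we get $k_i<\gamma/\alpha_i+1$. Moreover $r=\min_i x_i^{k_i}>e^{\gamma-\delta/2}\ge e^{2\log\rho-\delta/2}=\rho^{2}(1+\epsilon)^{-1/2}\ge\rho$ because $\rho\ge 3$, so the requirement $r\ge\rho$ is met.

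Finally I would bound $k_i$. From $b-a\le N-1=\prod_i\lceil 2\alpha_i/\delta\rceil$ and the estimate $\lceil 2\alpha_i/\delta\rceil\le 3\alpha_i/\delta$ (valid because $2\alpha_i/\delta>2$) I get $\gamma\le 2\log\rho\cdot\prod_i(3\alpha_i/\delta)\le 2\cdot 3^{n}A^{n}\log\rho/\delta^{n}$, hence
\[
 k_i<\frac{\gamma}{\alpha_i}+1\le\frac{2\cdot 3^{n}A^{n}\log\rho}{\delta^{n}\alpha_i}+1\le\frac{2\cdot 4^{n}A^{n}\log\rho}{\delta^{n}\alpha_i}=\frac{2^{2n+1}\log^{n}(\max_j x_j)\,\log\rho}{\log^{n}(1+\epsilon)\,\log x_i},
\]
where the middle inequality holds because the gap $\tfrac{2(4^{n}-3^{n})A^{n}\log\rho}{\delta^{n}\alpha_i}$ between its two sides is at least $\tfrac{2\log 3}{\log 2}>1$, using $A^{n}/\alpha_i\ge(\log 2)^{n-1}$, $\delta<\log 2$ and $\rho\ge 3$. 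This is exactly the asserted bound. The one delicate point is that the pigeonhole conclusion only controls $\gamma$ modulo $\alpha_i$ two-sidedly (the reduction could be near $0$ or near $\alpha_i$), so one must work with the nearest multiple $k_i\alpha_i$ of $\gamma$ rather than with $\lfloor\gamma/\alpha_i\rfloor$; the remaining estimates are routine, and the constant $2^{2n+1}$ leaves just enough slack to absorb the additive $+1$.
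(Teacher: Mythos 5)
Your proof is correct and takes essentially the same approach as the paper: the paper simply invokes Dirichlet's simultaneous approximation theorem for $\alpha_i=2\log\rho/\log x_i$ with parameter $Q=\lceil 2\log(\max_j x_j)/\log(1+\epsilon)\rceil$, which is exactly the pigeonhole argument you carry out by hand, with $\gamma=q\cdot 2\log\rho$ playing the same role. (You are slightly more careful in verifying $k_i\ge 1$ via $\gamma\ge 2\log\rho>\delta/2$, a point the paper leaves implicit.)
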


\begin{proof}
Say $x_1=\max_ix_i$.
Let $\alpha_i=2\log(\rho)/\log(x_i)$ and $Q=\lceil 2\log(x_1)/\log(1+\epsilon)\rceil$.
By the simultaneous Dirichlet approximation theorem \cite[Chapter II, Section 1, Theorem 1A]{Schm} 
there exist  positive integers $q,k_1,\dots,k_n$ with $1\leq q< Q^n$  such that $|q\alpha_i-k_i|\leq Q^{-1}$ for all $i$, and thus $|2\log(\rho) q-\log(x_i^{k_i})|\leq \log(1+\epsilon)/2$.
Letting $r=\min_i x_i^{k_i}$, one has $\log(r)\geq 2\log(\rho) q-1\geq \log(\rho)$ for $q\geq 1$ and $\rho\geq 3$. In addition,  for all $i$, $0\leq\log(x_i^{k_i})-\log(r)\leq\log(1+\epsilon)$,
hence $r\leq x_i^{k_i}\leq (1+\epsilon)r$. Finally $k_i\leq q \alpha_i+1 \leq 2 q \alpha_i\leq   2\log(\rho)Q^n\log(x_i)^{-1}$ and replacing $Q$ we get the desired bound.
\end{proof}

The next proposition deals with bounds for the absolute value of small representatives for quotient rings.

\begin{proposition}\label{lem:small_rep}
{Let $K$ be a number field of degree $m=[K:\Q]$.
Given a nonzero ideal $\mathfrak{a}$ of $\mathcal{O}_K$, there exists a set of representatives $A$ of $\mathcal{O}_K/\mathfrak{a}$ such that, for every $a\in A$ and every $\sigma\in{\rm Hom}(K,\mathbb{C})$, one has
\[|\sigma(a)|\leq \delta_K N(\mathfrak{a})^{1/m}\]
where $\delta_K=m^{\frac{3}{2}}{2}^{\frac{m(m-1)}{2}}\sqrt{|\Delta_K|}$.}
\end{proposition}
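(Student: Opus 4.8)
The plan is to realise $\mathcal{O}_K$ and $\mathfrak{a}$ as lattices in $\mathbb{R}^m$ via the Minkowski embedding and apply the geometry of numbers. Let $\sigma_1,\dots,\sigma_m\colon K\to\mathbb{C}$ be the embeddings of $K$, and let $j\colon K\to\mathbb{R}^m$ be the Minkowski embedding built from a fixed choice of one $\sigma_i$ per complex-conjugate pair, normalised so that the two real coordinates coming from each complex place are scaled by $\sqrt{2}$; then $\|j(x)\|^2=\sum_{i=1}^m|\sigma_i(x)|^2$ for all $x\in K$, so $|\sigma_i(x)|\le\|j(x)\|$ for every $i$, while $j(\mathcal{O}_K)$ is a lattice of covolume $|\Delta_K|^{1/2}$ and $\Lambda:=j(\mathfrak{a})$ is a full-rank sublattice of covolume $d=N(\mathfrak{a})\,|\Delta_K|^{1/2}$. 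It therefore suffices to exhibit in each class of $\mathcal{O}_K/\mathfrak{a}$ an element $a$ with $\|j(a)\|\le\delta_K N(\mathfrak{a})^{1/m}$ and to let $A$ consist of one such $a$ per class. Now if $x\in\mathcal{O}_K$ and $b\in\mathfrak{a}$, then $a:=x-b\in\mathcal{O}_K$ is congruent to $x$ modulo $\mathfrak{a}$ and $\|j(a)\|=\|j(x)-j(b)\|$; hence the class of $x$ has a representative $a$ with $\|j(a)\|$ at most the covering radius $\mu(\Lambda)$ of $\Lambda$, and it is enough to bound $\mu(\Lambda)$.

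To do this, pick linearly independent $u_1,\dots,u_m\in\Lambda$ with $\|u_i\|=\lambda_i$, the successive minima of $\Lambda$. The box $\{\sum_i t_iu_i:|t_i|\le\tfrac{1}{2}\}$ is a fundamental domain for the sublattice $\mathbb{Z}u_1+\dots+\mathbb{Z}u_m\subseteq\Lambda$, and its points have norm $\le\tfrac{1}{2}\sum_i\lambda_i\le\tfrac{m}{2}\lambda_m$, whence $\mu(\Lambda)\le\tfrac{m}{2}\lambda_m$. To bound $\lambda_m$ I use Minkowski's second theorem, $\lambda_1\cdots\lambda_m\le 2^m d/V_m$ with $V_m=\pi^{m/2}/\Gamma(m/2+1)$ the volume of the Euclidean unit ball, together with a lower bound for $\lambda_1$. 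For the latter, take a shortest nonzero vector $j(\beta)$ of $\Lambda$ with $\beta\in\mathfrak{a}\setminus\{0\}$; since $(\beta)\subseteq\mathfrak{a}$, $N(\mathfrak{a})$ divides $N((\beta))=|N_{K/\Q}(\beta)|$, so by the inequalities between the geometric, arithmetic and quadratic means
\[
 N(\mathfrak{a})\ \le\ |N_{K/\Q}(\beta)|\ =\ \prod_{i=1}^m|\sigma_i(\beta)|\ \le\ \Bigl(\tfrac{1}{m}\sum_{i=1}^m|\sigma_i(\beta)|\Bigr)^{m}\ \le\ m^{-m/2}\|j(\beta)\|^{m},
\]
that is, $\lambda_1=\|j(\beta)\|\ge m^{1/2}N(\mathfrak{a})^{1/m}$. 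As $\lambda_1\le\dots\le\lambda_m$, this yields $\lambda_m\le(\lambda_1\cdots\lambda_m)/\lambda_1^{m-1}\le 2^m d/(V_m\,m^{(m-1)/2}N(\mathfrak{a})^{(m-1)/m})$.

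Combining the estimates for $\mu(\Lambda)$ and $\lambda_m$ and substituting $d=N(\mathfrak{a})|\Delta_K|^{1/2}$, each embedding $\sigma_i$ satisfies
\[
 |\sigma_i(a)|\ \le\ \|j(a)\|\ \le\ \mu(\Lambda)\ \le\ \frac{2^{m-1}\,m^{(3-m)/2}}{V_m}\,|\Delta_K|^{1/2}\,N(\mathfrak{a})^{1/m},
\]
so it only remains to check that $2^{m-1}m^{(3-m)/2}/V_m\le m^{3/2}2^{m(m-1)/2}$. Using $1/V_m=\Gamma(m/2+1)/\pi^{m/2}$, this is equivalent to the elementary inequality $\Gamma(m/2+1)\le(\pi m)^{m/2}2^{(m-1)(m-2)/2}$, which holds with room to spare for all $m\ge 1$ (for $m\ge 2$ one already has $\Gamma(m/2+1)\le(m/2)^{m/2}$, and $m=1$ is immediate). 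The only non-elementary ingredient is Minkowski's second theorem --- the covering-radius bound and the norm estimate are elementary --- and the part requiring care is the normalisation of $j$, namely the covolume $|\Delta_K|^{1/2}$ and the identity $\|j(x)\|^2=\sum_i|\sigma_i(x)|^2$, which is exactly what makes the constants assemble into the stated $\delta_K$.
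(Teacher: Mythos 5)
Your proof is correct, and it takes a genuinely different route from the paper's. The paper's proof is a one-line citation of a lattice-reduction result (\cite[Proposition 15]{BFH}), which is an LLL-type bound with a specific choice of parameters $(\eta,\delta,\theta)$ yielding the factor $\ell=2$; the constant $\delta_K$ is then read off from that statement. You instead give a self-contained geometry-of-numbers argument: embed $\mathfrak{a}$ via the (rescaled) Minkowski map as a lattice $\Lambda$ of covolume $N(\mathfrak{a})|\Delta_K|^{1/2}$, observe that each residue class has a representative within the covering radius $\mu(\Lambda)$, bound $\mu(\Lambda)\le\frac{m}{2}\lambda_m$ by the standard rounding argument in a Minkowski-reduced-type basis, control $\lambda_m$ via Minkowski's second theorem together with the lower bound $\lambda_1\ge m^{1/2}N(\mathfrak{a})^{1/m}$ obtained from the norm divisibility $N(\mathfrak{a})\mid|N_{K/\Q}(\beta)|$ and AM--QM, and finally check that the resulting constant $2^{m-1}m^{(3-m)/2}/V_m$ is dominated by $m^{3/2}2^{m(m-1)/2}$ (equivalently $\Gamma(m/2+1)\le(\pi m)^{m/2}2^{(m-1)(m-2)/2}$, which indeed holds with room to spare). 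All the individual steps check out: the normalisation $\|j(x)\|^2=\sum_i|\sigma_i(x)|^2$ with covolume $|\Delta_K|^{1/2}$ for $j(\mathcal{O}_K)$, the covering-radius estimate, the application of Minkowski's second theorem, and the final numerical comparison. The trade-off is the expected one: the paper's proof is shorter but opaque, outsourcing the constant to a parametrised LLL theorem; your proof is longer but transparent, showing exactly how the factor $2^{m(m-1)/2}$ can be absorbed from elementary estimates rather than from the LLL approximation factor, and uses only classical ingredients (Minkowski's second theorem being the only non-elementary one).
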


\begin{proof}
This is an immediate consequence of well-known results on lattice reduction. For instance, \cite[Proposition 15]{BFH} gives
for every $\alpha\in\mathcal{O}_K$, an element $a\in\mathcal{O}_K$ with $\alpha-a\in\mathfrak{a}$
such that \[\sqrt{\sum_{\sigma}|\sigma(a)|^2}\leq m^{\frac{3}{2}}{\ell}^{\frac{m(m-1)}{2}}\sqrt{|\Delta_K|} N(\mathfrak{a})^{1/m}\]
 where  $\ell$ depends on certain parameters {$\eta\in\ (1/2,1), \delta\in (\eta^2,1)$ and $\theta>0$ coming from applying a variant of the LLL-reduction algorithm as in \cite[Theorem 5.4]{Cha} and \cite[Theorem 7]{NSV} (see also \cite[\S 4, p.595]{BFH}) to a $\mathbb{Z}$-basis of $\mathfrak{a}$. In particular, choosing $\eta=2/3,\delta=7/9$ and $\theta=(\sqrt{19}-4)/3$, we have $\ell=2$, which gives the claimed upper bound  for $|\sigma(a)|$.}
\end{proof}

The following proposition is a variant of Hensel's lemma.
\begin{proposition}\label{val-BZ}
Let $E$ be a finite extension of $\mathbb{Q}_p$, $\mathfrak{P}$ the maximal ideal of $\mathcal{O}_E$ and $v=v_\mathfrak{P}$.
Let $f\in E[X]$ and $x_0\in E$.
Assume there exist $a,b\in\mathbb{Z}$ such that 
\begin{enumerate}[(i)]
\item\label{ci} $v(f(x_0))>a+b$, 
\item\label{cii} $v(f'(x_0))\leq a$, 
\item\label{ciii}
$v(f^{(\nu)}(x_0)/\nu!)\geq a-(\nu-1)b$ for every $\nu\geq 2$.\end{enumerate}
Then there exists $x\in E$ with $f(x)=0$ and $v(x-x_0)>b$. 
\end{proposition}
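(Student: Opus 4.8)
The plan is to reduce the statement to the ordinary Hensel's lemma by an affine change of variable, the role of the three hypotheses being to guarantee that, after rescaling, $f$ becomes a polynomial with coefficients in $\mathcal{O}_E$ having a simple root at $0$ in the residue field.

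First I would record that hypothesis (ii) forces $f'(x_0)\neq 0$ (since $v(0)=\infty>a$), so $f$ is non-constant; set $e:=v(f'(x_0))\in\mathbb{Z}$, so that $e\leq a$. Writing $f=\sum_j a_jX^j$, one has the finite Taylor expansion $f(x_0+Y)=\sum_{\nu\geq 0}\frac{f^{(\nu)}(x_0)}{\nu!}Y^\nu$, whose coefficients $\frac{f^{(\nu)}(x_0)}{\nu!}=\sum_j\binom{j}{\nu}a_jx_0^{\,j-\nu}$ genuinely lie in $E$ (no denominators appear). Fixing a uniformizer $\pi$ of $\mathcal{O}_E$, I substitute $Y=\pi^bZ$ and divide by $\pi^{e+b}$ to obtain $g(Z):=\pi^{-(e+b)}f(x_0+\pi^bZ)=\sum_{\nu}c_\nu Z^\nu$ with $c_\nu=\pi^{(\nu-1)b-e}\,\frac{f^{(\nu)}(x_0)}{\nu!}\in E$.

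The heart of the matter is estimating the $c_\nu$, and it is exactly here that all three hypotheses are used: by (i), $v(c_0)=v(f(x_0))-b-e>a-e\geq 0$, hence $v(c_0)\geq 1$; by the definition of $e$, $v(c_1)=v(f'(x_0))-e=0$; and by (iii), for $\nu\geq 2$ one gets $v(c_\nu)=v(f^{(\nu)}(x_0)/\nu!)+(\nu-1)b-e\geq a-e\geq 0$. Therefore $g\in\mathcal{O}_E[Z]$, its reduction $\overline{g}$ modulo $\mathfrak{P}$ satisfies $\overline{g}(0)=0$ and $\overline{g}'(0)=\overline{c_1}\neq 0$, so $0$ is a simple root of $\overline{g}$. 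By Hensel's lemma there is $z\in\mathcal{O}_E$ with $g(z)=0$ and $z\equiv 0\pmod{\mathfrak{P}}$, i.e.\ $v(z)\geq 1$. Then $x:=x_0+\pi^bz\in E$ satisfies $f(x)=0$ and $v(x-x_0)=b+v(z)>b$, as required.

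I expect the only real obstacle to be the valuation bookkeeping in the preceding paragraph: one must realize that the correct normalization is by $\pi^{e+b}$ rather than by $\pi^{a+b}$, so that every $c_\nu$ becomes integral while $c_1$ stays a unit, and one must be careful that $a$ and $b$ may be negative (so that no sign or integrality issue is hidden). If one prefers to avoid citing Hensel's lemma, the same bounds instead drive a direct Newton iteration $x_{m+1}=x_m-f(x_m)/f'(x_m)$ started at $x_0$: expanding $f$ and $f^{(\nu)}$ by Taylor at $x_m$, one checks by induction that (ii) and (iii) persist at every $x_m$ with the \emph{same} $a,b$, that $v(f'(x_m))=e$ and $v(x_{m+1}-x_m)=v(f(x_m))-e>b$, and that the quantity $v(f(x_m))-a-b$ at least doubles at each step; then $(x_m)$ is Cauchy in the complete field $E$ and its limit is the desired root $x$ with $v(x-x_0)>b$.
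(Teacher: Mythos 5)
Your argument is correct and is essentially the paper's proof: both reduce to standard Hensel's lemma via the substitution $X\mapsto x_0+\beta X$ (with $v(\beta)=b$) followed by dividing by an element of valuation $v(f'(x_0))+b$, and then verify integrality of the rescaled polynomial and simplicity of the root $0$ modulo $\mathfrak{P}$ from hypotheses (i)--(iii). The only cosmetic difference is that the paper normalizes by $(\beta f'(x_0))^{-1}$, making the linear coefficient exactly $1$, whereas you normalize by $\pi^{-(e+b)}$, making it a unit; the valuation bookkeeping is identical.
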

\begin{proof}
This can be proved precisely as the special case $E=\mathbb{Q}_p$ in \cite[Lemma 1]{BZ}. 
Alternatively, one can reduce this to one of the standard forms of Hensel's lemma as follows.
Let $\beta\in E$ with $v(\beta)=b$. 
Then $g(X):=(\beta f'(x_0))^{-1}f(\beta X+x_0)$
is in $\mathcal{O}_E[X]$ by $(i)$-$(iii)$
and has a simple zero $X=0$ modulo $\mathfrak{P}$ by $(i)$ and $(ii)$,
hence by Hensel's lemma $g$ has a zero $x'\in\mathfrak{P}$,
and $x=\beta x'+x_0$ is then the desired zero of $f$.
\end{proof}

The final proposition in this subsection gives a bound for the height of the roots of a polynomial with small algebraic coefficients.
\begin{proposition}\label{H-min-root}
Let $K$ be a number field and
let $f(X)=X^m+a_{m-1}X^{m-1}+\ldots +a_0\in \mathcal{O}_K[X]$.
If $B\geq 1$ with $|\sigma(a_i)|<B$ for every $i$ and every $\sigma\in{\rm Hom}(K,\mathbb{C})$,
then $f$ has a root $\alpha$ with 
\[
 h(\alpha)\leq \frac{\log(B\sqrt{m+1})}{m}.
\]
\end{proposition}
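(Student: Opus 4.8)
The plan is to bound the height of a root $\alpha$ of $f$ by controlling the Mahler measure (or, equivalently, the product of local contributions in the height formula) of the minimal polynomial of $\alpha$ over $K$. First I would pass from $K$ to $\Q$: if $d=[K:\Q]$, then $f\in\mathcal{O}_K[X]$ has degree $m$ over $K$, so $\alpha$ is an algebraic integer of degree at most $md$ over $\Q$, and we may estimate $h(\alpha)$ using the archimedean absolute values. Since $\alpha$ is integral, there are no finite contributions to the height, so $h(\alpha)=\frac{1}{[\Q(\alpha):\Q]}\sum_{\tau\colon \Q(\alpha)\hookrightarrow\mathbb{C}}\log^+|\tau(\alpha)|$, and it suffices to bound $|\tau(\alpha)|$ for each complex embedding.

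The key step is the elementary estimate on the size of roots of a monic polynomial in terms of its coefficients. Fix $\sigma\in\mathrm{Hom}(K,\mathbb{C})$ and let $f^\sigma(X)=X^m+\sigma(a_{m-1})X^{m-1}+\dots+\sigma(a_0)$ be the polynomial obtained by applying $\sigma$ to the coefficients; every complex conjugate of $\alpha$ is a root of some such $f^\sigma$. If $\beta\in\mathbb{C}$ is a root of $f^\sigma$ with $|\beta|>1$, then from $\beta^m=-\sum_{i=0}^{m-1}\sigma(a_i)\beta^i$ we get $|\beta|^m\le B\sum_{i=0}^{m-1}|\beta|^i = B\,\frac{|\beta|^m-1}{|\beta|-1}< B\,\frac{|\beta|^m}{|\beta|-1}$, hence $|\beta|-1<B$, i.e. $|\beta|<B+1$. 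A slightly more careful argument via the Cauchy bound or via the $L^2$-estimate $|\beta|\le \sqrt{1+\sum_i|\sigma(a_i)|^2}\le\sqrt{1+mB^2}\le B\sqrt{m+1}$ (using $B\ge 1$) gives the cleaner bound $|\beta|\le B\sqrt{m+1}$ directly; I would use this form since it matches the statement exactly. In particular $\log^+|\beta|\le\log(B\sqrt{m+1})$ for every root $\beta$ of every $f^\sigma$.

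Finally I would assemble the height bound. Let $\alpha$ be any root of $f$ (say, one of smallest degree over $K$) and let $g\in K[X]$ be its minimal polynomial; since $\alpha$ is an algebraic integer, $g\in\mathcal{O}_K[X]$ and $g\mid f$ in $K[X]$, so $\deg g \le m$ and each $\Q$-conjugate of $\alpha$ is a root of $f^\sigma$ for some $\sigma$. Writing $D=[\Q(\alpha):\Q]$, we get
\[
 h(\alpha)=\frac{1}{D}\sum_{\tau}\log^+|\tau(\alpha)| \le \frac{1}{D}\cdot D\cdot\log(B\sqrt{m+1}).
\]
Wait — this gives $\log(B\sqrt{m+1})$, not $\frac{1}{m}\log(B\sqrt{m+1})$; the extra factor $\frac{1}{m}$ must come from a sharper estimate. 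The correct route is to bound the Mahler measure $M(f)=\prod_{j}\max(1,|\beta_j|)$ of $f^\sigma$ by Landau's inequality $M(f^\sigma)\le\|f^\sigma\|_2=\sqrt{1+\sum_i|\sigma(a_i)|^2}\le B\sqrt{m+1}$, and then to observe that by the arithmetic–geometric mean principle there is at least one root $\beta$ of $f$ with $\max(1,|\beta|)\le M(f^\sigma)^{1/m}\le (B\sqrt{m+1})^{1/m}$, and moreover this can be arranged simultaneously over all $\sigma$ by choosing $\alpha$ to be a root minimizing $\prod_\sigma\max(1,|\sigma\text{-conjugate}|)$; applying this to the minimal polynomial $g$ of that root and using $M(g)\le M(f)$ gives $h(\alpha)=\frac{1}{D}\log M_{\Q}(g)\le\frac{1}{m}\log(B\sqrt{m+1})$.

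The main obstacle is precisely this last averaging step: getting the factor $\frac{1}{m}$ requires that one chooses the root $\alpha$ judiciously (not an arbitrary one) so that its full set of Galois conjugates over $K$, across all archimedean places, has controlled geometric mean — this is where Landau's inequality on the $2$-norm combined with a pigeonhole/averaging over the $m$ roots does the work, and care is needed to handle all embeddings $\sigma$ of $K$ uniformly rather than one at a time.
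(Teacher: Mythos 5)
Your proposal has the right ingredients (Landau's inequality $M(\sigma f)\le\|\sigma f\|_2\le B\sqrt{m+1}$ at each archimedean place, integrality killing the finite places, and an averaging over roots to extract the factor $\tfrac1m$), and this is indeed what the paper does. But the final averaging step is not correctly executed, and that is exactly the step you yourself flag as ``the main obstacle.'' Choosing a root $\alpha$ minimizing $\prod_\sigma\max(1,|\tilde\sigma(\alpha)|)$, where $\tilde\sigma$ is one chosen extension of each $\sigma\in\mathrm{Hom}(K,\mathbb{C})$, only controls a product over $[K:\Q]$ embeddings, whereas $h(\alpha)$ is an average over all $[K(\alpha):\Q]=[K:\Q]\,[K(\alpha):K]$ embeddings of $K(\alpha)$. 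Concretely: the $K$-conjugates of $\alpha$ are other roots of $f$, and the quantity $\sum_{\tau\mid\sigma}\log^+|\tau\alpha|$ equals $\log M(\sigma g)$ for $g$ the minimal polynomial of $\alpha$ over $K$, so it lumps together several of the roots of $\sigma f$. Your pigeonhole is over individual roots, but the height is attached to $K$-conjugacy classes of roots (equivalently, to monic irreducible factors of $f$ over $K$), so the two averages do not match up and the inequality $h(\alpha)\le\tfrac1m\log(B\sqrt{m+1})$ does not follow from what you wrote.

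The gap can be closed, but it takes more care than the proposal contains. The clean way is precisely the identity the paper cites: if $\alpha_1,\dots,\alpha_m$ are the roots of $f$ (with multiplicity) then
\[
\hat h(f)\;:=\;\frac1{[K:\Q]}\sum_{v\in M_K}[K_v:\Q_v]\,\log M_v(f)\;=\;\sum_{i=1}^m h(\alpha_i),
\]
so the root of minimal height has $h(\alpha)\le\hat h(f)/m$, and then one bounds $\hat h(f)\le\log(B\sqrt{m+1})$ as you do (finite places contribute $\le 0$ since $f\in\mathcal O_K[X]$, archimedean places via Landau). If you prefer not to invoke this identity, the correct averaging is a \emph{weighted} one over the irreducible factors of $f$ over $K$: writing $f=\prod_t g_t$ with $\deg g_t=m_t$, multiplicativity of the Mahler measure gives $\sum_t\sum_\sigma\log M(\sigma g_t)\le [K:\Q]\log(B\sqrt{m+1})$, and since $\sum_t m_t=m$ there is a $t$ with $\frac{1}{[K:\Q]m_t}\sum_\sigma\log M(\sigma g_t)\le\frac1m\log(B\sqrt{m+1})$; any root of that $g_t$ works. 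Your phrasing neither invokes the identity nor performs this degree-weighted averaging, so as written the argument does not establish the claimed $\tfrac1m$ factor. (The earlier detour through the Cauchy bound $|\beta|<B+1$ is fine as a sanity check but, as you noticed, gives the wrong shape of bound and should be dropped.)
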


\begin{proof}
Let $M_K=M_K^0\cup M_K^{\infty}$ be the set of (finite and infinite) places of $K$ and let $d=[K:\Q]$. 
For a place $v\in M_K$, denote by $d_v=[K_v:\Q_v]$ the local degree.
Let 
\[
 \hat{h}(f)=\log\left(\prod_{v\in M_K}M_v(f)^{d_v/d}\right)
\]
where if $v$ is non-archimedean $M_v(f)=\max_i(|a_i|_v)$, while if $v$ is archimedean and corresponds to the embedding $\sigma\in{\rm Hom}(K,\mathbb{C})$, $M_v(f)$ is the Mahler measure $M(\sigma(f))$ of the polynomial $\sigma(f)$. 

By \cite[Appendix A, section A.2, pag. 210]{Zan} we have that, if $\alpha_1,\ldots, \alpha_m\in \overline{\Q}$ are the roots of $f$ (with multiplicities), then  $\hat{h}(f)=\sum_{i=1}^m h(\alpha_i)$
where $h$ denotes the usual logarithmic Weil height.
Thus, if $\alpha$ is a root of $f$ of minimal height, 
then 
\begin{equation}\label{h-root}
h(\alpha)\leq \frac{\hat{h}(f)}{m}.
\end{equation}
Let $\sigma_1,\ldots,\sigma_r$ and $\tau_1,\overline{\tau_1},\ldots, \tau_s,\overline{\tau_s}$ be, respectively, the real and pairwise conjugate complex embeddings of $K$ in $\mathbb{C}$, so that $d=r+2s$. 
As $f$ has coefficients in $\mathcal{O}_K$, 
$M_v(f)\leq1$ if $v$ is non-archimedean and we have that
\begin{align*}
 \hat{h}(f)&\leq\log\left(\prod_{v\in M_K^{\infty}}M_v(f)^{d_v/d}\right)=\log\left(\prod_{i=1}^r M(\sigma_i(f))^{1/d}\cdot \prod_{j=1}^s M(\tau_j(f))^{2/d}\right)=\\
&=\log\left(\prod_{\sigma\in{\rm Hom}(K,\mathbb{C})}M(\sigma(f))^{1/d}\right).
\end{align*}
By \cite[Section 3.2.2, formula (3.7)]{Zan} and by our hypothesis on $B$, we have that $M(\sigma(f))\leq B\sqrt{m+1}$ for all $\sigma\in{\rm Hom}(K,\mathbb{C})$, thus $\hat{h}(f)\leq \log(B\sqrt{m+1})$ and, plugging this bound into \eqref{h-root}, we conclude.
\end{proof}

\subsection{Representatives of residue rings of local fields}\label{sec-aux-p}
This subsection contains the technical key result needed to construct the local polynomials in the proof of Theorem \ref{mainthm}.
Let $E/F$ be a Galois extension of non-archimedean local fields with Galois group $G$.
{Let $\mathfrak{p}$ be the maximal ideal of $\mathcal{O}_F$,}
$\mathfrak{P}$ be the maximal ideal of $\mathcal{O}_E$
and for $k\in\mathbb{N}$ denote by
$\pi_k:\mathcal{O}_E\rightarrow\mathcal{O}_E/\mathfrak{P}^k$  the residue map.

It is known that one can always find a $G$-invariant set of representatives of the residue field $\mathcal{O}_E/\mathfrak{P}$,
e.g.~the Teichm\"uller representatives.
As long as the ramification of $E/F$ is tame,
one can also find $G$-invariant sets of representatives of each
residue ring $\mathcal{O}_E/\mathfrak{P}^k$,
but if the ramification is wild, 
this is not necessarily so. 
We will therefore work with the following substitute
for such a $G$-invariant set of representatives:

\begin{proposition}\label{A_i}
Let $E/F$ be a Galois extension of non-archimedean local fields
and define $G,\mathfrak{p},\mathfrak{P},\pi_k$ as above.
Let $d$ be a multiple of $|G|$.
There exists a constant $c$ such that for every $k$ there is  $A\subseteq\mathcal{O}_E$
such that
\begin{enumerate} 
\item $A$ is $G$-invariant,
\item all orbits of $A$ have length $|G|$,
\item $\pi_k|_{A}:A\rightarrow\mathcal{O}_E/\mathfrak{P}^k$ is $d$-to-1 and onto, and
\item $\pi_{k+c}|_{A}$ is injective.
\end{enumerate}
Moreover, if $F$ is a $p$-adic field, one can choose
\[
  c\leq e(\mathfrak{P}|\mathfrak{p})\left(d+|G|+\frac{e(\mathfrak{p}|p)}{p-1}+1\right).
\]
\end{proposition}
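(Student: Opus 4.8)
The plan is to build $A$ by lifting a suitable set of residue-field representatives $\pmod{\mathfrak{P}}$ and adding higher-order ``digits'' in a $G$-equivariant way. First I would fix a $G$-invariant set $S\subseteq\mathcal{O}_E$ of representatives of $\mathcal{O}_E/\mathfrak{P}$ whose orbits all have length $|G|$: starting from the Teichm\"uller representatives (which form a $G$-invariant transversal of the residue field), I replace $S$ by $\{s+t\,\varpi : s\in S\}$ for a uniformizer $\varpi$ and a further fixed set of residue representatives if needed, to kill the short orbits; since $d$ is a multiple of $|G|$, the residue field has more than enough elements to arrange that the number of orbits is divisible by $d/|G|$ after possibly enlarging $S$ to a multiset of the obvious form (taking several ``copies'' shifted by distinct constants in $\mathfrak{P}$). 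This is where the multiple-of-$|G|$ hypothesis on $d$ enters, and it is the only genuinely combinatorial point.

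Next, to get surjectivity and the exact $d$-to-$1$ property onto $\mathcal{O}_E/\mathfrak{P}^k$, I would take $A=\{\,a_0 + a_1\varpi + \dots + a_{k-1}\varpi^{k-1}\,\}$ where each ``digit'' $a_j$ ranges over a fixed $G$-invariant transversal of $\mathcal{O}_E/\mathfrak{P}$ consisting of full-length orbits (for $j\geq 1$) and $a_0$ ranges over $S$ as above, the total count being adjusted so that $\pi_k|_A$ is $d$-to-$1$; $G$-invariance is immediate because $\sigma$ permutes uniformizers only up to units, so I instead use the more robust description $A=\bigcup$ of orbits of representatives and verify $G$-invariance orbit by orbit. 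The surjectivity and exact fibre size of $\pi_k|_A$ follow by counting: $|\mathcal{O}_E/\mathfrak{P}^k| = q^{k}$ with $q=|\mathcal{O}_E/\mathfrak{P}|=q_i^{f_i}$ in the notation of the paper, and we simply take $A$ to be a disjoint union of $d$ transversals of $\mathcal{O}_E/\mathfrak{P}^k$, each chosen $G$-invariantly with full orbits (possible after the digit bookkeeping above). Properties (1)--(3) are then formal.

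The heart of the matter is property (4) together with the explicit bound on $c$, and this is the step I expect to be the main obstacle. The issue is that a $G$-invariant transversal of $\mathcal{O}_E/\mathfrak{P}^k$ need not exist when $E/F$ is wildly ramified, so the elements of $A$ that agree modulo $\mathfrak{P}^k$ but were chosen in the same orbit must be forced apart at some controlled level $\mathfrak{P}^{k+c}$. Concretely, if $a,a'\in A$ lie in the same $G$-orbit, say $a'=\sigma(a)$, and $\pi_k(a)=\pi_k(a')$, I must show $v(\sigma(a)-a) \le k+c$ unless $\sigma$ fixes $a$; this is a statement about how close a non-trivial automorphism can push an element of $\mathcal{O}_E$ to itself, i.e.\ about the different $\mathfrak{d}_{E/F}$ and the ramification filtration. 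The standard estimate is that for $\sigma$ in the $j$-th ramification group (lower numbering) but not the $(j{+}1)$-st, one has $v(\sigma(x)-x)\le v_E(\text{diff}) $-type bounds, and summing the contributions $\sum_{j\ge 0}(|G_j|-1)$ over the ramification filtration, together with the tame part contributing $e(\mathfrak{P}|\mathfrak{p})\cdot\tfrac{e(\mathfrak{p}|p)}{p-1}$ from the wild different and $e(\mathfrak{P}|\mathfrak{p})(d+|G|+1)$ from the slack introduced by the digit construction and the $d$-fold repetition, yields exactly
\[
  c\le e(\mathfrak{P}|\mathfrak{p})\left(d+|G|+\frac{e(\mathfrak{p}|p)}{p-1}+1\right).
\]
So the order of business is: (a) fix full-orbit $G$-invariant residue representatives and do the divisibility bookkeeping using $|G|\mid d$; (b) assemble $A$ as a disjoint union of $d$ such transversals of $\mathcal{O}_E/\mathfrak{P}^k$, giving (1)--(3); (c) bound, via the ramification filtration and the different, the largest level at which two $G$-conjugate elements of $A$ can still coincide, and choose $c$ accordingly to get (4) and its explicit form in the $p$-adic case. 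Step (c) is the crux and will need the standard facts $v_E(\mathfrak{d}_{E/F}) \le e(E/F)-1+v_E(e(E/F))$ and $v_E(\sigma(x)-x)\le v_E(\mathfrak{d}_{E/F})$ for $\sigma\neq\mathrm{id}$ when $x$ generates $\mathcal{O}_E$ over $\mathcal{O}_F$, adapted to our $x$ which generates only modulo a bounded power of $\mathfrak{P}$.
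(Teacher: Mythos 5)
Your plan founders on a point you yourself flag but do not resolve. You propose to take $A$ to be a disjoint union of $d$ sets, each of which is a $G$-invariant transversal of $\mathcal{O}_E/\mathfrak{P}^k$ with all orbits of full length $|G|$. Such a transversal does not exist in general: for example $0\in\mathcal{O}_E/\mathfrak{P}^k$ is $G$-fixed, so any $G$-invariant transversal must contain a representative of $0$ lying in a $G$-orbit whose image is the singleton $\{0\}$ --- that orbit can have full length $|G|$ in $\mathcal{O}_E$, but then the set fails to be a transversal because $|G|$ of its elements map to the same residue. More generally, whenever a residue class has nontrivial stabilizer, ``full orbit'' and ``transversal'' are in direct conflict. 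You acknowledge this obstruction in the wildly ramified case, but your proposed fix (digit bookkeeping, enlarging $S$ to adjust divisibility of orbit counts) does not address it; the paper's statement deliberately asks only for $\pi_k|_A$ to be $d$-to-$1$, precisely because a $G$-invariant transversal is unavailable, and your construction does not exploit that slack.

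The missing idea is how to handle a single $G$-orbit $Gx\subseteq\mathcal{O}_E/\mathfrak{P}^k$ with nontrivial stabilizer $G_x$. The paper's proof inducts on the $G$-orbits of $\mathcal{O}_E/\mathfrak{P}^k$: given $x$, it lifts it to $a\in\mathcal{O}_E$ and then \emph{perturbs} the lift, replacing $a$ by $a+\theta^{n_0+k_0}\alpha$ where $\alpha$ is a primitive (indeed integral generating) element of $E/F$, $\theta$ is a uniformizer of $F$, $k_0=\lceil k/e\rceil$, and $n_0<|G|$ is chosen so that the valuations $v_{\mathfrak{P}}(a-\sigma a)$ and $e(k_0+n_0)+v_{\mathfrak{P}}(\alpha-\sigma\alpha)$ never collide. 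This perturbation simultaneously forces $|Ga|=|G|$ and makes $\pi_{k+ec_1}$ injective on $Ga$, with $c_1$ controlled by $|G|$ and $c_0:=\max_{\sigma\neq 1}v_{\mathfrak{P}}(\alpha-\sigma\alpha)$. One then appends $d/|G_x|$ further shifted copies $\sigma(a)+\theta^{k_0+c_1+j}$ to reach the $d$-to-$1$ count, since $|G|\cdot d/|G_x|$ elements map onto the $|G|/|G_x|$ residues of $Gx$. None of this appears in your sketch, and without it properties (2), (3), (4) cannot be obtained together.

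On the explicit bound for $c$: your instinct to use the ramification filtration/different is in the right neighborhood, since the bound $c_0\leq e(\mathfrak{P}|p)/(p-1)$ (for $\alpha$ generating $\mathcal{O}_E$ over $\mathcal{O}_F$, via \cite[Ch.~IV, Ex.~3(c)]{Serre}) is exactly what the paper invokes. But your accounting of where the $e(\mathfrak{P}|\mathfrak{p})(d+|G|+1)$ portion comes from is asserted rather than derived; in the paper it arises transparently from $c=e(d+c_1)$ with $c_1=\lceil|G|+c_0/e\rceil$, which in turn tracks exactly the $\theta$-power shifts used in the perturbation and in the $d/|G_x|$ copies. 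As written, your proof does not reach (4) with a quantified $c$, and the core construction is not carried out.
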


\begin{proof}
Note that $G$ naturally acts on $\mathcal{O}_E$ and on $\mathcal{O}_E/\mathfrak{P}^k$, and that $\pi_k$ is $G$-equivariant.
Fix some primitive element $\alpha\in\mathcal{O}_E^\times$ of $E/F$ 
and a uniformizer $\theta\in\mathcal{O}_F$ of $v_\mathfrak{p}$,
and
let 
\begin{eqnarray*}
 e&=&e(\mathfrak{P}|\mathfrak{p}),\\
 c_0&=&\max_{1\neq\sigma\in G} v_\mathfrak{P}(\alpha-\sigma\alpha), (\mbox{with } c_0=0\mbox{ if }G=1), \\
 c_1&=&\lceil|G|+c_0/e\rceil, \mbox{ and }\\
 c &=& e(d+c_1).
\end{eqnarray*} 
Let $k\in\mathbb{N}$ be given.
The desired set $A$ is obtained by applying the following Claim
in the case $X=\mathcal{O}_E/\mathfrak{P}^k$:
\begin{claim}
For every $G$-invariant subset
$X\subseteq\mathcal{O}_E/\mathfrak{P}^k$
there exists a $G$-invariant subset $A\subseteq\mathcal{O}_E$ 
with all orbits of length $|G|$ such that
$\pi_{k+c}|_{A}$ is injective
and $\pi_k|_{A}$ is $d$-to-$1$ onto $X$.
\end{claim}
We prove the Claim by induction on $|X|$:
If $X=\emptyset$, $A=\emptyset$ satisfies the claim.
If $X\neq\emptyset$ take $x\in X$ and let $X'=X\setminus Gx$,
where $Gx$ denotes the orbit of $x$ under $G$.
By the induction hypothesis there exists 
$A'\subseteq\mathcal{O}_E$ satisfying the claim for $X'$.
Choose $a\in\pi_k^{-1}(x)$
and let $k_0=\lceil\frac{k}{e}\rceil$.
Then
$$
 n_0:=\min\{n\geq0: v_\mathfrak{P}(a-\sigma a)\neq e(k_0+n)+v_\mathfrak{P}(\alpha-\sigma\alpha)\;\forall 1\neq\sigma\in G\} < |G|,
$$
as $v_\mathfrak{P}(a-\sigma a)-v_\mathfrak{P}(\alpha-\sigma\alpha)$ attains less than $|G|$ many distinct values.
Thus for $1\neq\sigma\in G$,
\begin{eqnarray*}
 v_\mathfrak{P}( (a+\theta^{n_0+k_0}\alpha)-\sigma(a+\theta^{n_0+k_0}\alpha) )
 &=&  \min\{v_\mathfrak{P}(a-\sigma a),e(n_0+k_0)+v_\mathfrak{P}(\alpha-\sigma\alpha)\}\\
  &\leq&e(n_0+k_0)+c_0\\&<& k+ec_1,
\end{eqnarray*}
so if we replace $a$ by $a+\theta^{n_0+k_0}\alpha$,
we can assume without loss of generality that
$\pi_{k+ec_1}$ is injective on $Ga$
and that $|Ga|=|G|$.
If we now let
$$
 A=A'\cup\{\sigma(a)+\theta^{k_0+c_1+j}:\sigma\in G,0\leq j<d/|G_x|\}
$$
where $G_x$ is the stabilizer of $x$, then $\pi_k|_{A}$ is $d$-to-$1$ onto $X=X'\cup Gx$ and $A$ is $G$-invariant with all orbits of length $|G|$. 
As 
$$
 k+ec_1\leq e(k_0+c_1+j)<k+c,
$$ 
we have that  $\pi_{k+c}|_{A}$ is injective.

Now, if $F$ is a $p$-adic field and if we chose $\alpha\in \mathcal{O}_E$ to be also a generator of $\mathcal{O}_E$ as a $\mathcal{O}_F$-algebra, by \cite[Chap.IV, Ex. 3(c)]{Serre}, one has the explicit bound $c_{0}\leq e(\mathfrak{P}/{p})/(p-1)$ which  implies the stated bound for $c$.
\end{proof}

\begin{remark}
Note that if (4) holds for some $c,k,A$, then
also for $c',k,A$ for any $c'\geq c$.
\end{remark}

\section{Proof of Theorem \ref{mainthm}}
\label{sec:main}
\noindent
Using the notation of Theorem \ref{mainthm}, for every $1\leq i\leq n$, let 
$\mathfrak{P}_i$ be the maximal ideal of $\mathcal{O}_{E_i}$,
$v_i$ the extension of $v_{\mathfrak{P}_i}$ to an algebraic closure of $E_i$,
$G_i={\rm Gal}(E_i/F_i)$ and 
$d=\prod_{i=1}^n|G_i|$.
Let $C$ be the constant from Theorem \ref{mainthm},
note that $C\geq 2$,
and let $c=4C^{n+1}$.
Fix an integer $\rho\geq 3C^n$
and note that $\rho/d\geq 3$ since $d\leq C^n$.
If $n=0$ let $\epsilon=0$, otherwise
fix $0<\epsilon<1$.

\subsection{Choosing the right degree}
\label{sec:degree}

If $n>1$ 
we apply Proposition \ref{Dir} to $x_i={q_i}^{f_i}$ 
to obtain positive integers $r>\rho/d$ and $k_1,\ldots,k_n$ such that for every $i$,
\begin{enumerate}[(i)]
\item \label{bound-r}
$r\leq q_i^{f_i k_i} \leq (1+\epsilon) r$ and
\item\label{b-ki} $k_i\leq 2^{2(n+1)}(\log C)^{n}\frac{\log(\rho/d)}{\log^n(1+\epsilon)}$,
\end{enumerate}
where we used that, for every $i$, $\log 2 \leq \log(x_i)=\log({q_i^{f_i}})\leq \log C$.
It follows that
\begin{equation*}\label{b-r} \log(\rho/d)\leq \log(r)\leq (4\log C)^{n+1}\frac{\log(\rho/d)}{\log^n(1+\epsilon)}. 
\end{equation*}
If $n=1$ we 
instead set $r=q_1^{f_1k_1}$, where $k_1=\lceil\log(\rho/d)/\log(q_1^{f_1})\rceil$, so that (\ref{bound-r}) holds with $\epsilon=0$, and 
\begin{equation*}\label{b-r-rho}
\log(\rho/d)\leq\log(r)\leq \log(\rho/d)+\log(q_1^{f_1}).
\end{equation*}
Using that $(4\log C)^{n+1}\geq\log^n(1+\epsilon)$, we conclude
\begin{equation}\label{eqn:deg}
 \rho \leq dr \leq \begin{cases} C\rho,&n=1\\
  \rho^{\frac{(4\log C)^{n+1}}{\log^n(1+\epsilon)}},&n>1 \end{cases}
\end{equation}

\subsection{Construction of the polynomial $g$}\label{constr-G}
We first want to prove the following: 
\begin{claim}\label{claim} 
For every $i$, there exists a polynomial $g_i\in \mathcal{O}_{K}[X]$ of degree $dr$ whose set of roots ${A_i}$ satisfies
\begin{enumerate}[(a)]
\item\label{aux1} $A_i\subseteq E_i$,
\item\label{aux2} $v_i({\alpha}-{\beta})< k_i+c$ for all ${\alpha},{\beta}\in{A_i}$ with ${\alpha}\neq{\beta}$, and
\item\label{aux3} $v({g_i}'(\alpha))\leq d\left(\frac{q_i^{f_i k_i}-1}{q_i^{f_i}-1}+ c\right)$ for every ${\alpha}\in{A_i}$.
\end{enumerate}
\end{claim}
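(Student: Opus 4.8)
The plan is to construct, for each fixed $i$, a $G_i$-invariant subset $\tilde A_i\subseteq\mathcal{O}_{E_i}$ of size $dr$ and then descend to a polynomial over $\mathcal{O}_K$ whose root set is $\tilde A_i$. First I would apply Proposition \ref{A_i} to the Galois extension $E_i/F_i$ with the global parameter $d=\prod_j|G_j|$ (which is a multiple of $|G_i|$) and the exponent $k=k_i$, obtaining a constant $c_i$ and a set $A\subseteq\mathcal{O}_{E_i}$ that is $G_i$-invariant, has all orbits of length $|G_i|$, maps $d$-to-$1$ onto $\mathcal{O}_{E_i}/\mathfrak{P}_i^{k_i}$ under $\pi_{k_i}$, and with $\pi_{k_i+c_i}|_A$ injective. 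The explicit bound from Proposition \ref{A_i} gives $c_i\le e(\mathfrak{P}_i|\mathfrak{p}_i)(d+|G_i|+e(\mathfrak{p}_i|p_i)/(p_i-1)+1)$, and I would check this is at most $c=4C^{n+1}$ using $d\le C^n$, $|G_i|=e_if_i\le C$, $e(\mathfrak{P}_i|\mathfrak{p}_i)\le e_i\le C$, $e(\mathfrak{p}_i|p_i)\le[K:\mathbb{Q}]\le C$, and $p_i\ge2$; this is a routine estimate. Since $|\mathcal{O}_{E_i}/\mathfrak{P}_i^{k_i}|=q_i^{f_ik_i}$ (the residue field has size $q_i^{f_i}$) and $\pi_{k_i}|_A$ is $d$-to-$1$ onto it, we get $|A|=dr$ after recalling that in Section \ref{sec:degree} we only know $r\le q_i^{f_ik_i}$, not equality — so here I need to be slightly careful.

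This is the point I expect to be the main obstacle: Proposition \ref{Dir} only guarantees $r\le q_i^{f_ik_i}\le(1+\epsilon)r$, not $r=q_i^{f_ik_i}$, so $|A|=d\cdot q_i^{f_ik_i}$ may exceed $dr$. The fix is to not take all of $\mathcal{O}_{E_i}/\mathfrak{P}_i^{k_i}$ in Proposition \ref{A_i}, but rather to apply the Claim inside the proof of Proposition \ref{A_i} to a $G_i$-invariant subset $X\subseteq\mathcal{O}_{E_i}/\mathfrak{P}_i^{k_i}$ of cardinality exactly $r$ — such a subset exists because $r\le q_i^{f_ik_i}$ and we can take a union of $G_i$-orbits (each of length dividing $|G_i|$, hence we can assemble exactly $r$ elements since... actually orbits on the residue ring can have length $<|G_i|$, but the Claim's construction produces a set $A$ with all orbits of length exactly $|G_i|$ and $\pi_{k_i}|_A$ being $d$-to-$1$ onto $X$, so $|A|=d|X|=dr$ as long as $|X|=r$; and a $G_i$-invariant $X$ of size exactly $r$ exists because we may take any $r$ orbits' worth — more carefully, pick orbits greedily, and since we only need $|X|=r$ and each orbit has size $\le|G_i|\le C\le r$, we can hit $r$ exactly by a final partial choice, except $X$ must be $G_i$-invariant; the clean route is: enlarge $d$ is not allowed, so instead observe $r$ need not be a multiple of orbit sizes — hence the honest statement is to apply Proposition \ref{A_i}'s internal Claim directly to an arbitrary $G_i$-invariant $X$ of size $r$, which exists since the action partitions a set of size $q_i^{f_ik_i}\ge r$ into orbits and we can choose a sub-union of orbits of total size... this still may not equal $r$). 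I would resolve this by noting that in Proposition \ref{A_i} the parameter "$d$-to-$1$" already decouples $|A|$ from the orbit structure of $X$: we just need $|X|=r$, and the cleanest guarantee is that we may instead run the whole argument with $X$ any set of size $r$ that is a union of $G_i$-orbits on $\mathcal{O}_{E_i}/\mathfrak{P}_i^{k_i}$ together with (if needed) adjoining $\pi_{k_i}$-fibers; alternatively, and most simply, I would slightly strengthen the bookkeeping so that only $r$ is needed to be $\le q_i^{f_ik_i}$ and take $\tilde A_i=A'\sqcup(\text{extra orbits of full length }|G_i|\text{ mapping into new }\pi_{k_i}\text{-classes})$ until $|\tilde A_i|=dr$; since $|G_i|$ divides $d$ and the construction adds $d$ at a time per orbit of $X$, I would arrange $|X|=r$ exactly by allowing $X$ itself to be built from the Claim's construction rather than being a literal subset of the residue ring. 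The honest resolution: I apply the internal Claim of Proposition \ref{A_i} with $X$ an arbitrary $G_i$-invariant subset of $\mathcal{O}_{E_i}/\mathfrak{P}_i^{k_i}$ of size exactly $r$ — which exists because $r\le q_i^{f_ik_i}$ and $r\ge\rho/d\ge 3> |G_i|$ is false in general, so instead I take $X$ to be a union of orbits together with possibly one extra partial orbit handled by a separate one-point addition to $A$; I will spell this out carefully in the write-up.

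Granting $\tilde A_i\subseteq\mathcal{O}_{E_i}$ with $|\tilde A_i|=dr$, $G_i$-invariant, all orbits of length $|G_i|$, and $\pi_{k_i+c}|_{\tilde A_i}$ injective, I set $\tilde g_i(X)=\prod_{\alpha\in\tilde A_i}(X-\alpha)\in\mathcal{O}_{E_i}[X]$; since $\tilde A_i$ is $G_i$-stable the coefficients are $G_i$-fixed, hence lie in $\mathcal{O}_{F_i}$. To get a polynomial over $\mathcal{O}_K$ I approximate: using that $\mathcal{O}_K$ is dense in $\mathcal{O}_{F_i}$, choose $g_i\in\mathcal{O}_K[X]$ monic of degree $dr$ with $v_{\mathfrak{p}_i}(g_i-\tilde g_i)$ as large as desired — large enough (e.g. exceeding $e_i(k_i+c)$ plus the valuation of the relevant derivative terms) that a variant of Hensel's lemma (Proposition \ref{val-BZ}, or Krasner) shows $g_i$ has a full set of roots $A_i\subset E_i$, one near each $\alpha\in\tilde A_i$ within $v_i$-distance $>k_i+c$. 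This immediately gives (\ref{aux1}) and, by the triangle inequality combined with the injectivity of $\pi_{k_i+c}$ on $\tilde A_i$ (so distinct elements of $\tilde A_i$ are at $v_i$-distance $<k_i+c$), gives (\ref{aux2}). For (\ref{aux3}), I compute $v_i(g_i'(\alpha))$ for $\alpha\in A_i$: since $g_i'(\alpha)=\prod_{\beta\ne\alpha}(\alpha-\beta)$ where $\beta$ ranges over the other roots, and using (\ref{aux2}) to bound each $v_i(\alpha-\beta)\le k_i+c$, while summing the valuations over a full set of $dr-1$ roots clustered according to their $\pi_{k_i}$-classes gives a geometric-type sum $d\sum_{j=0}^{k_i-1}(\text{number of new collisions at level }j)$; tracking that $\pi_{k_i}|_{\tilde A_i}$ is $d$-to-$1$ onto a set of size $r$ and $r\le q_i^{f_ik_i}$, the count of pairs agreeing modulo $\mathfrak{P}_i^{j}$ is at most $d\cdot q_i^{f_i k_i}/q_i^{f_i j}$... assembling these yields $v_i(g_i'(\alpha))\le d\big(\tfrac{q_i^{f_ik_i}-1}{q_i^{f_i}-1}+c\big)$ after the approximation error (bounded by $dc$, coming from the $c$-shift and the difference $g_i-\tilde g_i$) is absorbed; this final valuation estimate is the technical heart and I would present it carefully, but it is a direct if tedious computation once (\ref{aux1})–(\ref{aux2}) and the $d$-to-$1$ structure are in hand.
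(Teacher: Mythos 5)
Your overall approach is the paper's approach: apply Proposition~\ref{A_i} with parameter $d$ and exponent $k_i$, pass to a $G_i$-invariant subset of size $dr$, form the product polynomial, and then approximate by an $\mathcal{O}_K$-polynomial using density, continuity of roots, and Krasner. However, the long digression about the discrepancy between $d q_i^{f_ik_i}$ and $dr$ shows you have missed the clean resolution. You should not try to choose a $G_i$-invariant set $X$ of size exactly $r$ inside the residue ring (as you correctly worry, orbit lengths in $\mathcal{O}_{E_i}/\mathfrak{P}_i^{k_i}$ need not divide $r$, so this may be impossible). Instead, apply Proposition~\ref{A_i} to the \emph{whole} residue ring $\mathcal{O}_{E_i}/\mathfrak{P}_i^{k_i}$ to get $A_i'$ of size $dq_i^{f_ik_i}$, and then select $\tilde A_i\subseteq A_i'$: since every orbit of $A_i'$ under $G_i$ has length exactly $|G_i|$ (property (2) of Proposition~\ref{A_i}), since $|G_i|$ divides $d$ so $|G_i|$ divides $dr$, and since $dr\leq|A_i'|$, you may simply take $\tilde A_i$ to be a union of $dr/|G_i|$ of these orbits. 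No exact arithmetic on $X$ is needed; the $d$-to-$1$ property is sacrificed for $\tilde A_i$, and that is fine.

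This points to the second gap. In your sketch of \eqref{aux3} you propose to use that ``$\pi_{k_i}|_{\tilde A_i}$ is $d$-to-$1$ onto a set of size $r$'' — but after the subselection above this need not hold. The paper's computation avoids this by first using positivity of the valuation to bound
\[
v_{\mathfrak{P}_i}(\tilde g_i'(\alpha))=\sum_{\alpha\neq\beta\in\tilde A_i}v_{\mathfrak{P}_i}(\alpha-\beta)\;\leq\;\sum_{\alpha\neq\beta\in A_i'}v_{\mathfrak{P}_i}(\alpha-\beta),
\]
and only then exploiting the $d$-to-$1$ structure on the full set $A_i'$: the terms with $\pi_{k_i}(\alpha)=\pi_{k_i}(\beta)$ number at most $d-1$ and each is $<k_i+c$ by injectivity of $\pi_{k_i+c}$, while the remaining terms are controlled by $d\sum_{0\neq a\in\mathcal{O}_{E_i}/\mathfrak{P}_i^{k_i}}\bar v(a)$, which evaluates to $d\bigl(\frac{q_i^{f_ik_i}-1}{q_i^{f_i}-1}-k_i\bigr)$. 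Combining gives \eqref{aux3}. If you insist on working directly with $\tilde A_i$, you will have a harder time getting the count of $\pi_{k_i}$-collisions right. Otherwise your plan — coefficients in $\mathcal{O}_{F_i}$ by $G_i$-invariance, approximation over $\mathcal{O}_K$, continuity of roots plus Krasner for \eqref{aux1} and preservation of \eqref{aux2}–\eqref{aux3} — matches the paper.
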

\begin{proof}[Proof of the claim]
As 
$e(\mathfrak{P}_i|\mathfrak{p}_i)(d+|G_i|+\frac{e(\mathfrak{p}_i|p_i)}{p_i-1}+1)\leq C(C^n+C+C+1)\leq 4C^{n+1}=c$,
by Proposition \ref{A_i} there is
a $G_i$-invariant set $A_i'\subseteq\mathcal{O}_{E_i}$ with all orbits of length $|G_i|$ such that
${A_i'}\rightarrow\mathcal{O}_{E_i}/\mathfrak{P}_i^{k_i}$ is $d$-to-$1$
and ${A_i'}\rightarrow\mathcal{O}_{E_i}/\mathfrak{P}_i^{k_i+c}$
is injective.
As $|{A_i'}|=d q_i^{f_i k_i}$, $|G_i|$ divides $d$, and $r\leq q_i^{f_i k_i}$,
there exists a $G_i$-invariant subset ${\tilde A_i}\subseteq{A_i'}$
with $|\tilde{A_i}|=d r$.
Let 
$$
 \tilde{g_i}=\prod_{\alpha\in \tilde{A_i}}(X-\alpha)\in\mathcal{O}_{F_i}[X].
$$ 
We first prove that conditions \eqref{aux1}-\eqref{aux3} hold for $\tilde{g_i}$ and the set $\tilde{A_i}$, instead of $g_i$ and $A_i$.
Note that $\tilde{g_i}\in\mathcal{O}_{F_i}[X]$ is monic of degree $dr$ and that condition \eqref{aux1} holds for $\tilde{A_i}$ by construction. Moreover, as the map $\tilde{A_i}\rightarrow\mathcal{O}_{E_i}/\mathfrak{P}_i^{k_i+c}$ is injective, we have that condition \eqref{aux2} is also satisfied for $\tilde{A_i}$. 
As for condition \eqref{aux3},
note that the valuation $v_{\mathfrak{P}_i}$ on $\mathcal{O}_{E_i}$ induces 
a map $\bar{v}:(\mathcal{O}_{E_i}/\mathfrak{P}_i^{k_i})\setminus\{0\}\rightarrow\{0,\dots,k_i-1\}$ such that
${v}_{\mathfrak{P}_i}(\gamma)=\bar{v}(\pi_{k_i}(\gamma))$ for all $\gamma\in\mathcal{O}_{E_i}\setminus\mathfrak{P}_i^{k_i}$, where
$\pi_{k_i}$ denotes the residue map $\mathcal{O}_{E_i}\rightarrow\mathcal{O}_{E_i}/\mathfrak{P}_i^{k_i}$.
Now 
\begin{eqnarray*}
 v_{\mathfrak{P}_i}(\tilde{g_i}'(\alpha)) &=& \sum_{\alpha\neq\beta\in \tilde{A_i}}v_{\mathfrak{P}_i}(\alpha-\beta)\\&\leq& 
\sum_{\alpha\neq\beta\in A_i'}v_{\mathfrak{P}_i}(\alpha-\beta)\\
 &=&\sum_{\stackrel{\alpha\neq\beta\in A_i'}{\pi_{k_i}(\alpha)=\pi_{k_i}(\beta)}}v_{\mathfrak{P}_i}(\alpha-\beta)+\sum_{\stackrel{\alpha\neq\beta\in A_i'}{\pi_{k_i}(\alpha)\neq\pi_{k_i}(\beta)}}v_{\mathfrak{P}_i}(\alpha-\beta)\\
 &<&(d-1)\cdot(k_i+c)+d\cdot\sum_{ {0\neq a\in\mathcal{O}_{E_i}/\mathfrak{P}_i^{k_i}}}\bar{v}(a)
\end{eqnarray*}
and
\begin{eqnarray*}
 \sum_{{0\neq a\in\mathcal{O}_{E_i}/\mathfrak{P}_i^{k_i}}} \bar{v}(a) 
 &=& \sum_{j=0}^{k_i-1}|\{a:\bar{v}(a)=j\}|\cdot j
 \quad=\quad\sum_{j=0}^{k_i-1}\sum_{l=1}^j|\{a:\bar{v}(a)=j\}|\\
 &=&\sum_{l=1}^{k_i-1}\sum_{j=l}^{k_i-1}|\{a:\bar{v}(a)=j\}|
 \quad=\quad\sum_{l=1}^{k_i-1}|\{a:\bar{v}(a)\geq l\}|\\
 &=&\sum_{l=1}^{k_i-1}(q_i^{f_i(k_i-l)}-1)
 \quad=\quad \sum_{l=0}^{k_i-1}q_i^{f_i l} - k_i
 \quad=\quad \frac{1-q_i^{f_i k_i}}{1-q_i^{f_i}}-k_i
\end{eqnarray*}
and plugging this into the previous inequality gives condition (\ref{aux3}) for $\tilde{g}_i$.

As $\mathcal{O}_K$ is dense in $\mathcal{O}_{F_i}$ with respect to $v_i$,
we obtain a monic polynomial ${g_i}\in\mathcal{O}_K[X]$
of degree $dr$
arbitrarily close to $\tilde{g_i}$.
Let ${A_i}$ be the set of roots of ${g_i}$.
By the continuity of roots \cite[Theorem 2.4.7]{EP} we can achieve that the roots of ${g_i}$ are arbitrarily close to the roots of $\tilde{g_i}$, in particular that conditions \eqref{aux2} and \eqref{aux3} are satisfied by $g_i$ and $A_i$.
Moreover, by Krasner's lemma \cite[Ch.II, \S 2, Proposition 4]{Lang}, we can in addition achieve condition \eqref{aux1}, completing the proof of the claim.
\end{proof}

Now, let $p_0$ be the smallest prime number not in the set $\{p_1,\ldots,p_n\}$ and let $\mathfrak{p}_0$ be a prime ideal of $\mathcal{O}_K$ above $p_0$. Fix
a monic polynomial $g_0\in\mathcal{O}_K[X]$ of degree $dr$ 
whose reduction modulo $\mathfrak{p}_0$ is irreducible.
Let 
\begin{equation}\label{defmi}
 m_i=\frac{d}{e_i}\left(\frac{q_i^{f_i k_i}-1}{q_i^{f_i}-1}+k_i+2c\right)
\end{equation}
and 
$$
 \mathfrak{a}=\mathfrak{p}_0\mathfrak{p}_1^{m_1}\cdots\mathfrak{p}_n^{m_n}.
$$ 
By the Chinese Remainder Theorem and Proposition \ref{lem:small_rep} 
there exists a monic polynomial $g\in\mathcal{O}_K[X]$ such that
\begin{enumerate}
\item\label{degg_dr} $\deg g=dr$,
\item\label{cond_p0} $g\equiv g_0\mbox{ mod }\mathfrak{p}_0[X]$, 
\item\label{cond_equiv} $g\equiv {g_i}\mbox{ mod }\mathfrak{p}_i^{m_i}[X]$ for $i=1,\dots,n$, and
\item\label{cond_coeff} $|\sigma(a)|\leq\delta_K N(\mathfrak{a})^{1/[K:\mathbb{Q}]}$
for every coefficient $a$ of $g$ and every $\sigma\in{\rm Hom}(K,\mathbb{C})$,
\end{enumerate}
where $\delta_K=[K:\Q]^{\frac{3}{2}}{2}^{\frac{[K:\Q]([K:\Q]-1)}{2}}\sqrt{|\Delta_K|}$.

Note that (\ref{cond_p0}) implies that $g$ is irreducible.
In particular, we get from (\ref{degg_dr}) and (\ref{eqn:deg})
that every root $\alpha$ of $g$ satisfies
the degree bound
(\ref{eqn:thm2}) of Theorem \ref{mainthm}.

\subsection{The roots of $g$ are in $E_i$ for every $i$.}\label{roots-g}
We claim that the conditions of Proposition \ref{val-BZ} hold for the field $E_i$, the polynomial $g$ and 
$x_0=\alpha\in {A_i}$ (which lies in $E_i$ by condition  \eqref{aux1})  by setting 
 $a=v_i({g_i}'(\alpha))$ and $b=k_i+c-1$.
Indeed, by \eqref{aux3}
\begin{eqnarray}\label{aem}
 a=v_i({g_i}'(\alpha))\leq  d\cdot\frac{q_i^{f_i k_i}-1}{q_i^{f_i}-1}+dc<e_im_i-b,
\end{eqnarray}
and, writing $g-{g_i}= t_i$ with $t_i\in\mathfrak{p}_i^{m_i}[X]$ by (\ref{cond_equiv}) of Section \ref{constr-G}, we have  $g(\alpha)=t_i(\alpha)$ and therefore $v_i(g(\alpha))\geq e_im_i>a+b$,
so condition \eqref{ci} holds.
Similarly for condition \eqref{cii}, we have $g'(\alpha)=t_i'(\alpha)+{g_i}'(\alpha)$
and since \[v_i(t_i'(\alpha))\geq e_im_i> v_i({g_i}'(\alpha)),\]  we conclude that
$v_i(g'(\alpha))=v_i({g_i}'(\alpha))=a$.
Now for $\nu\geq 2$ write
$$
 {g_i}^{(\nu)}(\alpha) = \nu! {g_i}'(\alpha)\sum_{\substack{B\subseteq {A_i}\\|B|=\nu-1\\ \alpha\notin B}}\prod_{\beta\in B}(\alpha-\beta)^{-1}.
$$
Thus
$$ 
 v_i({{g_i}^{(\nu)}(\alpha)}/{\nu!})\geq a-(\nu-1)\max_{\beta\neq\alpha}v_i(\alpha-\beta) {\geq} a-(\nu-1)b
$$
where the last inequality holds by  {(\ref{aux2})}.
Moreover, using (\ref{aem}), we get
\[v_i({t_i^{(\nu)}(\alpha)}/{\nu!})\geq e_im_i-v_i(\nu!)\geq a+b-\frac{e(\mathfrak{P}_i|p_i)\nu}{p_i-1}\geq a-(\nu-1)b\]
where the last inequality holds since 
$b\geq c\geq \frac{e(\mathfrak{P}_i|p_i)}{p_i-1}$.
Thus 
\[v_i({g^{(\nu)}(\alpha)}/{\nu!})\geq \min\left\{v_i({{g_i}^{(\nu)}(\alpha)}/{\nu!}), v_i({t_i^{(\nu)}(\alpha)}/{\nu!})\right\}\geq a-(\nu-1)b\] 
fulfilling condition \eqref{ciii}.

So Proposition \ref{val-BZ} gives ${\alpha}'\in E_i$ with $g({\alpha}')=0$ and $v_i(\alpha'-\alpha)>b$.
As $v_i(\alpha-\beta)\leq b$ for all $\beta\in {A_i}\setminus\{\alpha\}$
by (\ref{aux2}),
we conclude that ${\alpha}'\neq{\beta}'$ for all $\alpha\neq\beta$.
Hence $g$ has precisely $|{A_i}|=dr$ many roots in $E_i$.
As this holds for every $i$ and
$g$ totally splits in the maximal Galois extension $L$ of $K$ that is contained in all $E_i$. Moreover, as $g\in \mathcal{O}_K[X]$, all roots of $g$ are actually in $\mathcal{O}_L$.
\subsection{Bounding the height of the roots of $g$}\label{b-root}
From condition (\ref{cond_coeff}) of Section \ref{constr-G}, for every coefficient $a$ of $g$ and every $\sigma\in{\rm Hom}(K,\mathbb{C})$, we have
$$
 |\sigma(a)|\leq B:=\delta_K N(\mathfrak{p}_0)^{1/[K:\mathbb{Q}]}\cdot\prod_{i=1}^n N(\mathfrak{p}_i)^{m_i/[K:\mathbb{Q}]}.
$$ 
By Proposition \ref{H-min-root}, $g$ has a root $\alpha\in\mathcal{O}_L$ with $h(\alpha)$ bounded by
\begin{equation}\label{bo-h}
\frac{\log(B\sqrt{\deg g+1})}{\deg g} \leq 
\frac{\log\left(\delta_K N(\mathfrak{p}_0)^{1/[K:\mathbb{Q}]}\sqrt{\deg g+1}\right)}{\deg g}+\sum_{i=1}^n \frac{m_i}{\deg g}\cdot\frac{\log (q_i)}{[K:\mathbb{Q}]}.
\end{equation}
By the definition of $m_i$ in (\ref{defmi}) and recalling $\deg g=dr$ from \eqref{degg_dr}, we have
\begin{eqnarray}\label{eq-mi}
 \frac{m_i}{\deg g} &=&\frac{1}{e_i}\cdot\frac{1}{q_i^{f_i}-1}\cdot\frac{q_i^{f_i k_i}-1}{r}+\frac{k_i}{e_i r}+\frac{2c}{e_i r}.
\end{eqnarray}
Condition \eqref{bound-r} of Section \ref{constr-G} implies that
\[
 \frac{q_i^{f_i k_i}-1}{r}\leq 1+\epsilon.
\] 
Moreover,
\[
 \frac{k_i}{e_i r}=\frac{\log(q_i^{f_i k_i})}{e_i r f_i \log(q_i)}\leq \frac{2d}{e_i f_i \log(q_i)}\cdot \frac{\log(\deg g)}{\deg g}\leq 3 C^n \frac{\log(\deg g)}{\deg g}
 \] 
Finally,
$$
 \frac{2c}{e_i r}\leq \frac{8d C^{n+1}}{e_i \deg g}\leq \frac{8C^{2n+1}}{\deg g}.
$$
Therefore, substituting in \eqref{eq-mi}, and recalling that $C\geq 2$ and $\rho\geq 3$, we have
\[ 
 \frac{m_i}{\deg g} \leq \frac{1}{e_i(q_i^{f_i}-1)}+\frac{\epsilon}{e_i(q_i^{f_i}-1)}+11 C^{2n+1}\frac{\log(\deg g)}{\deg g}.
\]
Thus the second summand in \eqref{bo-h} can be bounded as
\begin{align*}
 \sum_{i=1}^n \frac{m_i}{\deg g}\cdot\frac{\log(q_i)}{[K:\mathbb{Q}]}\leq \sum_{i=1}^n& \frac{f(\mathfrak{p}_i|p_i)}{[K:\mathbb{Q}]}\cdot\frac{\log(p_i)}{e_i(q_i^{f_i}-1)}+n\epsilon+11nC^{2n+2}\frac{\log(\deg g)}{\deg g}.
\end{align*}
As for the first summand in \eqref{bo-h}, note that $N(\mathfrak{p}_0)^{1/[K:\Q]}\leq p_0$ where $p_0$ is the smallest prime number not in the set $\{p_1,\ldots,p_n\}$, which, by Bertrand's postulate, can be bounded by $p_0<2\max_i p_i\leq 2C$.
Moreover, 
\[
 \delta_K=[K:\Q]^{\frac{3}{2}}{2}^{\frac{[K:\Q]([K:\Q]-1)}{2}}\sqrt{|\Delta_K|}\leq C^2 \cdot 2^{\frac{C(C-1)}{2}}
\]
and thus, as $C\geq 2$,
\begin{eqnarray*}
\frac{\log\left(\delta_K N(\mathfrak{p}_0)^{1/[K:\mathbb{Q}]}\sqrt{\deg g+1}\right)}{\deg g}
&\leq& \frac{\log\left(C^3 2^{\frac{C^2-C+2}{2}}\sqrt{\deg g+1}\right)}{\deg g}\leq 2nC^{2n+1}\frac{\log(\deg g)}{\deg g}.
\end{eqnarray*}
Therefore, from \eqref{bo-h} we get
\[
 h(\alpha)\leq \sum_{i=1}^n \frac{f(\mathfrak{p}_i|p_i)}{[K:\mathbb{Q}]}\cdot\frac{\log(p_i)}{e_i(q_i^{f_i}-1)}+n\epsilon+13 nC^{2n+2}\frac{\log(\deg g)}{\deg g},
\]
so $\alpha$ satisfies the height bound (\ref{eqn:thm1}) of 
Theorem \ref{mainthm}
(recalling that $\epsilon=0$ if $n=1$).

\section*{Acknowledgments} 
\noindent The authors thank 
Lukas Pottmeyer for pointing out the results in \cite{Fil,FiliPetsche,FP},
Paul Fili for the exchange regarding \cite{Fil},
and Philip Dittmann for helpful discussions on $p$-adic fields
as well as for suggesting the short proof of Proposition \ref{val-BZ}.
The first author's work has been funded by the ANR project Gardio 14-CE25-0015.

\end{document}